\newtheorem{theorem}{Theorem}[section]
\newtheorem{lemma}[theorem]{Lemma}
\newtheorem{conjecture}[theorem]{Conjecture}
\theoremstyle{definition}
\newtheorem{example}[theorem]{Example}
\newtheorem{counterexample}[theorem]{Counterexample}
\newtheorem{definition}[theorem]{Definition}
\title{\vspace{-1em}Set-Sequential Labelings of Odd Trees}
\author[1]{Emily N. Eckels\thanks{Corresponding author}}
\author[2]{Ervin Gy\H{o}ri}
\author[1]{Junsheng Liu}
\author[3]{Sohaib Nasir}
\affil[1]{\small{Department of Mathematics, University of Illinois at Urbana-Champaign, Urbana, IL 61801, USA.}

\textit{Email:} \texttt{\href{mailto:eckels2@illinois.edu}{\{eckels2, }\href{mailto:jl49@illinois.edu}{jl49\}}@illinois.edu}}
\affil[2]{\small{Alfr\'{e}d R\'{e}nyi Institute of Mathematics, Budapest, Hungary.

\textit{Email:} \texttt{\href{mailto:gyori.ervin@renyi.hu}{gyori.ervin@renyi.hu}}}}
\affil[3]{Mathematics and Statistics Department, Vassar College, Poughkeepsie, NY 12604, USA. 

\textit{Email:} \texttt{\href{soh.nasir@gmail.com}{soh.nasir@gmail.com}}}
\date{November 6, 2021}
\begin{document}

\maketitle
\vspace{-2.5em}
\begin{abstract}
    A tree \(T\) on \(2^n\) vertices is called set-sequential if the elements in \(V(T)\cup E(T)\) can be labeled with distinct nonzero \((n+1)\)-dimensional \(01\)-vectors such that the vector labeling each edge is the component-wise sum modulo \(2\) of the labels of the endpoints. It has been conjectured that all trees on \(2^n\) vertices with only odd degree are set-sequential (the ``Odd Tree Conjecture''), and in this paper, we present progress toward that conjecture. We show that certain kinds of caterpillars (with restrictions on the degrees of the vertices, but no restrictions on the diameter) are set-sequential. Additionally, we introduce some constructions of new set-sequential graphs from smaller set-sequential bipartite graphs (not necessarily odd trees). We also make a conjecture about pairings of the elements of \(\mathbb{F}_2^n\) in a particular way; in the process, we provide a substantial clarification of a proof of a theorem that partitions \(\mathbb{F}_2^n\) from a 2011 paper \cite{Balister} by Balister et al. Finally, we put forward a result on bipartite graphs that is a modification of a theorem in \cite{Balister}.
    \vspace{1em}\\
    \textbf{Keywords:} Trees, coloring graphs by sets, caterpillars\\
    \vspace{-2em}
\end{abstract}

\section{Introduction}

In 1985, in \cite{Acharya}, Acharya and Hegde introduced the notion of a set-sequential graph, which they defined as a graph \(G\) for which it is possible to assign distinct nonempty subsets of a set \(X\) to the edges and vertices of the graph in such a way that for each \(e\in E(G)\), the label of \(e=uv\) is the symmetric difference of the labels of \(u,v\in V(G)\). Notice that each set \(X\) of size \(n\) has \(2^n\) subsets, and that we can represent these subsets by \(n\)-dimensional \(01\)-vectors: a \(1\) in the \(i\)-th position of the vector indicates membership of the \(i\)-th element of \(X\) in the subset. Under this representation, the symmetric difference becomes addition modulo \(2\), leading to the definition presented in the abstract. The definition of set-sequential in the abstract fits better with our methods of proof throughout this paper, so we have opted to use it throughout.

It is easy to understand what is meant by the term ``set-sequential'' (in other literature, ``strongly set colorable''), but it is difficult to provide an exhaustive list of all graphs or classes of graphs that have this property. In fact, in the more than thirty-five years since Acharya and Hegde's initial introduction of the problem of classifying set-sequential graphs, this problem remains open. Certain classes of graphs are known to be set-sequential, however, including stars on \(2^n\) vertices (it is an easy exercise to check this). Note here that these stars contain only vertices of odd degree. 

Another broad class of set-sequential graphs is the set of paths on \(2^{n}\) vertices for \(n=1\) and \(n\geq 4\). This result was proved initially by Mehta and Vijayakumar in \cite{Mehta}:
\begin{theorem}{\textnormal{(Mehta and Vijayakumar \cite{Mehta})}}
    For any integer \(n\geq 2\), \(\mathbb{G}^n\) is sequentially ternary if and only if \(n\) is neither \(3\) nor \(4\).
\end{theorem}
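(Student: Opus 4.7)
The plan is to handle the two directions of the iff separately. My first step is to set up the counting framework: a path on $2^n$ vertices has $2^n+(2^n-1)=2^{n+1}-1$ labels to assign, matching exactly the $2^{n+1}-1$ nonzero vectors in $\mathbb{F}_2^{n+1}$, so any valid labeling is a bijection onto $\mathbb{F}_2^{n+1}\setminus\{\mathbf{0}\}$. Writing the vertex labels along the path as $v_1,v_2,\dots,v_{2^n}$ and the edge labels as $e_i=v_i+v_{i+1}$, telescoping gives $\sum_{i=1}^{2^n-1}e_i=v_1+v_{2^n}$; combined with the standard fact that the sum of all nonzero vectors in $\mathbb{F}_2^{n+1}$ is $\mathbf{0}$ for $n\geq 1$, this forces $\sum_{i=1}^{2^n}v_i=v_1+v_{2^n}$, equivalently $\sum_{i=2}^{2^n-1}v_i=\mathbf{0}$, a rigid linear constraint on the ``interior'' labels.

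To dispose of $n=3$ and $n=4$, where the ambient sets have only $15$ and $31$ vectors respectively, I would refine this sum constraint coordinate by coordinate: projecting $\mathbb{F}_2^{n+1}\setminus\{\mathbf{0}\}$ onto each coordinate gives a fixed parity that the vertex labels alone must realize, and combining these parities with the interior-sum identity for each permissible choice of endpoints leaves few enough configurations to exclude by a structured case analysis or a short computer check. I would first try to package the obstruction as a single clean parity lemma (for instance an argument about the number of coordinates equal to $1$ among the odd-indexed versus even-indexed vertices) before resorting to enumeration.

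For the positive direction, I would write down an explicit labeling for $n=2$ (only $7$ labels to place by hand) and then set up an induction on $n$ for $n\geq 5$ with $n=5$ as a second explicit base case. The inductive step splits $\mathbb{F}_2^{n+2}\setminus\{\mathbf{0}\}$ into the affine halves $H_0=\{x:x_{n+2}=0\}$ and $H_1=\{x:x_{n+2}=1\}$: place the known $P_{2^n}$-labeling inside $H_0\setminus\{\mathbf{0}\}$, extend by a second path of $2^n$ vertices whose vertex labels and internal edge sums together exhaust $H_1$ except for one reserved vector, and then use that reserved vector as the bridging edge label between the two halves.

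The technically delicate step is this bridging construction, which is also the main obstacle. Concatenation is not free: the bridging edge label is forced by the parity identity applied to the full $P_{2^{n+1}}$, the second half must itself be a valid set-sequential path on precisely the right subset of $H_1$, and its first vertex must agree with the sum dictated by the bridging edge and the final vertex of the first half. I expect this to reduce to a pairing/partitioning statement on $\mathbb{F}_2^{n+2}$ in the same spirit as the pairings studied later in the paper and in the Balister et al.\ framework, and obtaining such a pairing lemma with enough flexibility to accept a prescribed starting vector is what I anticipate being the hard part. Assuming that pairing lemma holds for all $n\geq 5$, together with the ad-hoc verifications at $n=2$ and $n=5$ and the parity obstructions at $n=3,4$, the iff follows.
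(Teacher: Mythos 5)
Your counting framework and telescoping identity are sound (and in fact reproduce exactly the parity sketch this paper gives in its introduction), but the proposal contains a fatal off-by-one indexing error. In the statement, $n$ indexes the ambient space $\mathbb{G}^n=\mathbb{F}_2^n$, so the object being labeled is the path on $2^{n-1}$ vertices (compare the equivalent formulation of Balister et al.\ quoted immediately after in the paper): the exceptional cases $n=3,4$ are the paths $P_4$ and $P_8$. In your setup, $n$ indexes the path $P_{2^n}$ with labels drawn from $\mathbb{F}_2^{n+1}$, so in your convention the exceptional values would be $n=2,3$. By keeping the numerals ``$3$ and $4$'' while switching conventions --- your ambient sets of $15$ and $31$ vectors correspond to $P_8$ and $P_{16}$ --- you set out to prove that $P_{16}$ is \emph{not} set-sequential, which is false ($P_{16}$ is set-sequential, so no parity obstruction, case analysis, or computer search can succeed there). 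Worse, your positive-direction base case at your $n=2$ asks for an explicit labeling of $P_4$ with $7$ labels, which cannot exist: your own interior-sum identity specialized to $P_4$ forces $v_2+v_3=\mathbf{0}$, i.e.\ $v_2=v_3$, a contradiction --- precisely the argument in the paper showing $P_4$ is \emph{not} set-sequential. Finally, your induction for $n\geq 5$ (paths $P_{32}$ and larger) skips $P_{16}$ entirely, and $P_{16}$ must be handled as a base case in any correct induction, since the non-set-sequential $P_8$ cannot feed it.

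Two further points. First, even after the cases are relabeled correctly (exclude $P_4$ and $P_8$; construct labelings for $P_2$, $P_{16}$, $P_{32},\dots$), your positive direction is explicitly conditional on an unproven pairing/bridging lemma, so it is a program rather than a proof; the negative direction for $P_8$ is likewise deferred to ``case analysis or a short computer check.'' Second, for context: the paper does not prove this theorem at all --- it is quoted from Mehta and Vijayakumar \cite{Mehta} and from Balister et al.\ \cite{Balister} --- and its only proof content is the $P_4$ parity sketch, which your framework captures correctly once it is aimed at the right path.
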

Here, Mehta and Vijayakumar define \(\mathbb{G}^n\) in the same way as \(\mathbb{F}_2^n\), and their definition of ``sequentially ternary'' is equivalent to our definition of ``set-sequential.'' This result was also proved by Balister et al. in \cite{Balister}, using terminology more similar to what is used in the remainder of this paper:
\begin{theorem}{\textnormal{(Balister et al. \cite{Balister})}}
    The paths \(P_4\) and \(P_8\) are not strongly set colorable while all other paths of the form \(P_{2^{n-1}}\) are strongly set colorable.
\end{theorem}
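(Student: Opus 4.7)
The plan splits the argument into a nonexistence part ($n\in\{2,3\}$) and an explicit construction for $n=1$ and $n\geq 4$. Both halves hinge on a global parity identity that any set-sequential labeling of a path must satisfy, so I would derive that first.

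Label the vertices of $P_{2^n}$ as $v_1,\ldots,v_{2^n}$ and the edges as $e_i=v_iv_{i+1}$, with labels $\ell(\cdot)$ ranging over $\mathbb{F}_2^{n+1}\setminus\{0\}$. The edge constraint $\ell(e_i)=\ell(v_i)+\ell(v_{i+1})$ telescopes to $\sum_{i=1}^{2^n-1}\ell(e_i)=\ell(v_1)+\ell(v_{2^n})$. Adding this to $\sum_{i=1}^{2^n}\ell(v_i)$ must equal $\sum_{w\in\mathbb{F}_2^{n+1}\setminus\{0\}}w=0$, yielding the invariant $\sum_{i=2}^{2^n-1}\ell(v_i)=0$. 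For $P_4$ this collapses to $\ell(v_2)=\ell(v_3)$, contradicting distinctness and immediately ruling out $n=2$.

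For $P_8$ the single identity is too weak, so I would augment it by coordinate projection: for any fixed coordinate, exactly half of the nonzero vectors of $\mathbb{F}_2^4$ have a $1$ in that position, while an edge has a $1$ there iff its two endpoints disagree there. This converts the problem into an alternation-count along the path, which must be consistent across all four coordinates. Using the freedom to act by $\mathrm{GL}_4(\mathbb{F}_2)$ to normalize $(\ell(v_1),\ell(v_2))$ into a canonical form, combining the parity invariant with the per-coordinate alternation constraint, and then enumerating the surviving cases should close the argument. This bounded case analysis is where I expect most of the effort to live on the nonexistence side and is the main obstacle for $P_8$.

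For the constructive side, I would use a doubling construction. Given a set-sequential labeling of $P_{2^n}$ in $\mathbb{F}_2^{n+1}$, prepend a new coordinate, use its two values to tag two suitably modified copies, and glue them by a bridge edge whose label is the unique remaining vector of $\mathbb{F}_2^{n+2}$. One must verify that (i) vertex labels across both copies remain distinct, (ii) edge labels within each copy remain distinct and avoid the bridge label, and (iii) every nonzero vector of $\mathbb{F}_2^{n+2}$ is used exactly once. Bootstrapping from an explicit hand-crafted labeling of $P_{16}$ then yields $P_{2^n}$ for all $n\ge 4$ by induction. The core difficulty here is exhibiting a base labeling of $P_{16}$ whose endpoint structure is compatible with the recursion, so that the doubling step cleanly reproduces the three distinctness conditions above at the next level; this is the main obstacle on the construction side.
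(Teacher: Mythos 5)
Your parity invariant is correct, and your $P_4$ argument is exactly the one this paper gives: the telescoping identity is equivalent to the observation that the labels of the even-degree vertices must sum to zero, forcing $\ell(v_2)=\ell(v_3)$. Note, however, that the paper does not actually prove this theorem at all --- it is quoted from \cite{Balister}, with only the $P_4$ sketch reproduced and the $P_8$ case explicitly deferred; your $P_8$ plan (coordinate alternation counts plus $\mathrm{GL}_4(\mathbb{F}_2)$ normalization plus enumeration) is likewise only a plan, not a proof, though you flag it as such.

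The genuine gap is on the constructive side: your doubling step is not just hard, it is impossible. Suppose $P_{2^{n+1}}$ were labeled by all of $\mathbb{F}_2^{n+2}\setminus\{0\}$, and consider the new coordinate. Exactly $2^{n+1}$ of these vectors have a $1$ there. A vertex label has a $1$ in that coordinate iff the vertex lies in the set $U$ of such vertices, and an edge label (being the sum of its endpoints) has a $1$ there iff exactly one endpoint lies in $U$; so the number of labels with a $1$ in the new coordinate is $|U|$ plus the number of edges crossing $U$. In your tagged doubling, $U$ is the vertex set of the second copy and the only crossing edge is the bridge, giving $2^n+1\neq 2^{n+1}$ for $n\geq 1$. (The alternative, tagging both vertices and edges of the second copy with $1$, violates the sum condition inside that copy, since two labels ending in $1$ sum to a label ending in $0$.) No ``suitable modification'' of the copies escapes this count as long as the new coordinate separates the two copies, so the induction cannot use the hypothesis in the way you describe. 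This is precisely the obstruction the paper works out at the start of Section 2.2 (``It is important that we start with $4$ small trees rather than only two''), and it is why the actual construction in \cite{Balister} --- echoed in Lemma 3.1 and Theorem 3.2 here --- glues \emph{four} copies with \emph{three} edges, assigning the four two-bit extensions $00,01,10,11$ per \emph{color class} rather than per copy, so that the new coordinates are constant on no copy. For paths this means $P_{2^n}$ is assembled from four copies of $P_{2^{n-2}}$ joined end to end; since the exponent drops by $2$ rather than $1$, the induction needs two hand-crafted base cases, $P_{16}$ and $P_{32}$, not just the $P_{16}$ you propose.
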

This raises the following question: Why are the paths \(P_4\) and \(P_8\) not set-sequential? This question is answered in \cite{Balister}, so we do not provide a rigorous explanation, but rather give the following sketch: Consider the path \(P_4\). There are two vertices of even degree here, which means that for two of the vertices, the vectors \(v\) and \(u\) labeling them appear three times in the overall sum, which must be zero. Since we consider addition modulo \(2\), this leads us to the conclusion that \(v=u\), which contradicts our definition of set-sequential. So \(P_4\) is not set-sequential. A similar but more complex argument gives that \(P_8\) is not set-sequential. Not all graphs, then, are set-sequential, and what is more, it seems to be the vertices of even degree that introduce some measure of uncertainty regarding whether a graph is set-sequential or not. In a 2009 paper \cite{Hegde}, Hegde proved that there must indeed be restrictions on vertices of even degree: 
\begin{theorem}\textnormal{(Hegde \cite{Hegde})} 
    If a graph \(G\) (\(p>2\) [here \(p=|V(G)|\)]) has:
    \begin{enumerate}
        \item exactly one or two vertices of even degree \textbf{or}
        
        \item exactly three vertices of even degree, say, \(v_1\), \(v_2\), \(v_3\), and any two of these vertices are adjacent \textbf{or}
        
        \item exactly four vertices of even degree, say, \(v_1\), \(v_2\), \(v_3\), \(v_4\) such that \(v_1v_2\) and \(v_3v_4\) are edges in \(G\), then \(G\) is not strongly-set colorable.
    \end{enumerate}
\end{theorem}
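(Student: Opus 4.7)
The plan is to handle all three cases uniformly by exploiting a single global parity identity, and then to derive a contradiction in each case by reading off which pairs of labels are forced to coincide. By the definition of (strongly) set-colorable, the labels on vertices and edges together use every nonzero vector of the ambient space $\mathbb{F}_2^{k}$ exactly once, where $k$ is chosen so that $2^k - 1 = |V(G)| + |E(G)|$, and the sum of all nonzero vectors in $\mathbb{F}_2^k$ is $\vec{0}$. Summing every label and grouping edge contributions by endpoint (using $\ell(uv) = \ell(u) + \ell(v)$) should give
$$\vec{0} \;=\; \sum_{v \in V(G)} \ell(v) \;+\; \sum_{uv \in E(G)} \ell(uv) \;=\; \sum_{v \in V(G)} \bigl(1 + \deg(v)\bigr)\, \ell(v),$$
and reducing modulo $2$ kills every term with $\deg(v)$ odd, leaving
$$\sum_{\substack{v \in V(G) \\ \deg(v) \text{ even}}} \ell(v) \;=\; \vec{0}.$$
This single identity will drive the entire proof.

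Next I would substitute each of the three hypotheses into this identity. In case (1), a single even-degree vertex forces $\ell(v) = \vec{0}$, which is forbidden, while two even-degree vertices force $\ell(v_1) = \ell(v_2)$, contradicting distinctness. In case (2), with three even-degree vertices $v_1, v_2, v_3$ and $v_1 v_2 \in E(G)$, the identity rearranges to $\ell(v_3) = \ell(v_1) + \ell(v_2) = \ell(v_1 v_2)$, so a vertex and an edge would carry the same label. In case (3), with the pairs $v_1 v_2, v_3 v_4 \in E(G)$, the identity gives $\ell(v_1 v_2) = \ell(v_1) + \ell(v_2) = \ell(v_3) + \ell(v_4) = \ell(v_3 v_4)$, so two distinct edges would share a label. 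Each of these collisions contradicts the distinctness required of a set-sequential labeling.

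I do not expect any genuinely hard step: the whole argument rests on spotting the correct weighted-sum identity and then decoding it as a collision statement. The only detail I would take care to spell out is the claim that ``the sum of all labels equals $\vec{0}$'' really applies in this setting, which amounts to the condition that the labels exhaust the nonzero vectors of the ambient $\mathbb{F}_2^k$ -- a condition hard-wired into the definition of strongly set-colorable via the cardinality match $|V(G)| + |E(G)| = 2^k - 1$.
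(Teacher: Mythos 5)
Your proposal is correct, and it is essentially the same argument the paper itself sketches (for the special case of $P_4$, where the two even-degree vertices are forced to share a label) before citing Hegde for the full statement: sum all labels, use $\ell(uv)=\ell(u)+\ell(v)$ to reduce to $\sum_{\deg(v)\,\text{even}}\ell(v)=\vec{0}$, and read off a forbidden collision in each case. The only detail worth adding is that the hypothesis $p>2$ guarantees the ambient dimension $k\geq 2$, which is what makes the sum of all nonzero vectors of $\mathbb{F}_2^k$ vanish.
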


This theorem prompts the following conjecture (which has been put forth before by others, among them Golowich and Kim in \cite{Golowich}, though the precise origin is unknown): 
\begin{conjecture}[Odd Tree Conjecture]
    Any tree on \(2^n\) vertices with only vertices of odd degree is set-sequential.
\end{conjecture}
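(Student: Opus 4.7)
The plan is to attack the Odd Tree Conjecture by induction on $n$, using a structural decomposition of $T$ together with the $\mathbb{F}_2^n$ partition machinery of Balister et al.\ that is clarified and extended later in this paper. For the base cases $n = 1, 2$, the only odd trees on $2^n$ vertices are $K_2$ and $K_{1,3}$, both of which are set-sequential by inspection; a short case-check (or computer search) handles $n = 3$, paralleling the enumeration arguments used for small caterpillars.

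For the inductive step, I would attempt to locate a pendant subtree $S \subset T$, rooted at some vertex $u \in V(T)$, with the property that $S$ and the remainder $T \setminus S$ (possibly after attaching a phantom leaf at $u$ to restore parity) are both odd trees on power-of-two numbers of vertices. Given such a decomposition, the induction hypothesis supplies set-sequential labelings of $S$ and of $T \setminus S$ in dimensions strictly less than $n+1$; one then introduces a new $(n+1)$st coordinate, set to $0$ on one piece and $1$ on the other, to make the two label sets disjoint. The $\mathbb{F}_2^n$ partition theorem would be invoked to align the labelings along the edge $e$ joining $S$ to $T \setminus S$ so that the label of $e$ becomes the unique remaining nonzero vector in $\mathbb{F}_2^{n+1}$.

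The main obstacle — and almost surely the reason the conjecture has resisted proof for more than thirty-five years — is that arbitrary odd trees do not admit such a clean power-of-two decomposition. Detaching a subtree $S$ flips the parity at the cut vertex $u$, and the two component sizes rarely each equal powers of two summing to $2^n$, so the inductive class must genuinely be enlarged. A plausible route is to strengthen the hypothesis to a broader family closed under the needed surgeries — for instance, odd trees with one marked even-degree vertex subject to a prescribed labeling constraint, in the spirit of the bipartite modification proved later in this paper — so that even a single leaf or small pendant subtree (of whatever size is available) can be peeled off at each stage. An alternative is to bypass recursion entirely and set up the problem as a perfect matching between $V(T)\cup E(T)$ and $\mathbb{F}_2^{n+1}\setminus\{0\}$, verifying Hall's condition globally via parity and character-sum bounds; as a sanity check, such a framework should recover the caterpillar and star results of this paper as special cases.
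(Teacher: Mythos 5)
The statement you are trying to prove is the Odd Tree Conjecture itself: it is open, and the paper does not prove it either --- it only establishes partial progress (certain caterpillars, the splicing construction, and partition results for \(\mathbb{F}_2^n\)). So your proposal cannot be measured against a proof in the paper; it has to stand alone, and it does not: it is a plan whose central step you yourself flag as missing. But the gaps are worse than ``the decomposition rarely exists'' --- they are fatal in a way the paper's own results make precise.

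First, the decomposition \emph{never} exists in the form you need. Cutting an edge \(uw\) of an odd tree \(T\) drops the degree of \(u\) and of \(w\) by one, so each piece is a tree with exactly one vertex of even degree; by Hegde's theorem (Theorem 1.3 in this paper), such a tree is \emph{not} set-sequential, so the induction hypothesis can never be applied to the pieces. If instead you attach a phantom leaf at the cut vertex to restore oddness, the sizes must satisfy \((2^s-1)+(2^t-1)=2^n\), i.e.\ \(2^s+2^t=2^n+2\), whose only solution is \(s=1\), \(t=n\): the pendant piece is a single leaf and the other piece again has \(2^n\) vertices, so the ``induction'' never descends. Second, even if you were handed two set-sequential halves on \(2^{n-1}\) vertices, your extension scheme --- new coordinate \(0\) on one piece, \(1\) on the other --- is provably impossible, and this is exactly the obstruction worked out in Section 2.2 of the paper (the reason splicing must use \emph{four} trees on \(2^{k-2}\) vertices and two extra coordinates rather than two trees and one coordinate): if \(u+v=e\) held in the old labeling, then appending \(1\) to all three gives \(u1+v1=e0\neq e1\), so every edge inside the piece extended by \(1\) violates the sum condition; and if you instead append \(1\) only to vertices and \(0\) to edges of that piece, a count of labels ending in \(0\) (there are only \(2^n-1\) of them, all consumed by the first piece) shows collisions are unavoidable. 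Finally, the Hall's-condition alternative misconstrues the problem: a set-sequential labeling is not a perfect matching between \(V(T)\cup E(T)\) and \(\mathbb{F}_2^{n+1}\setminus\{0\}\) subject to local constraints --- once the vertex labels are chosen, every edge label is \emph{forced} to be the sum of its endpoints, so the difficulty is the global algebraic consistency of the vertex labeling, which Hall's theorem does not see at all. If you want a workable route in the spirit of your sketch, the paper's actual strategy is the right template: keep the pieces strictly smaller (four of them), control the color classes so the extensions by \(00,01,10,11\) mesh (cf.\ Theorem 2.6 and Example 2.8), and use the Pairing Conjecture / Theorem 2.11 to attach pendant structure in pairs so that oddness and distinctness of labels are preserved.
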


In particular, the class of caterpillars with vertices of only odd degree has been of interest to some, among them Golowich and Kim \cite{Golowich}, Abhishek \cite{Abhishek} and Agustine (together with Abhishek) \cite{AbhishekII}. A 2012 paper by Abhishek and Agustine \cite{AbhishekII} presented results for some classes of graphs of diameter \(4\) - in particular, caterpillars with vertices of certain odd degrees. Following that, in 2013, Abhishek \cite{Abhishek} extended those results to certain caterpillars of diameter \(5\). Most recently, in a 2020 paper \cite{Golowich}, Golowich and Kim set forth results that show that several classes of graphs are set-sequential, including odd caterpillars of diameter at most \(18\). We were able to show that another larger class of caterpillars is set-sequential, without making the assumption that the caterpillars are ``small enough.'' 

 Further results on set-sequential trees can be found in a 2011 paper by Balister et al. \cite{Balister}. In addition to the result that all paths except \(P_4\) and \(P_8\) are set-sequential, the authors proved that bipartite graphs can be connected in such a way that produces larger set-sequential graphs. Balister et al. also put forth a conjecture that partitions \(\mathbb{F}_2^n\) (the field of \(n\)-dimensional \(01\)-vectors under addition modulo \(2\)) in a specific way and provided a proof of one case of that conjecture. We as well prove some results for bipartite graphs and address another case of their partitioning conjecture, noting that this conjecture can be used to aid us in our goal of proving the Odd Tree Conjecture: The vertex and edge labels in a set-sequential labeling are in fact elements of \(\mathbb{F}_2^n\).

In addition to these results, we note that the proof of Theorem 4 given in \cite{Balister} treats only one of several cases of that theorem and so is incomplete. In this paper, we give a more explicit generalization of the technique of that proof and so provide a rigorous proof of the theorem.

\section{Odd Tree Conjecture}
One approach to attempting to prove the Odd Tree Conjecture is to try to find a labeling for each odd tree on \(2^n\) vertices for all \(n\), perhaps with the aid of a computer program. This idea does have merit, and we used it, along with the fact stated in the introduction that stars are set-sequential, to show that all odd trees on \(8\) vertices are set-sequential. 

    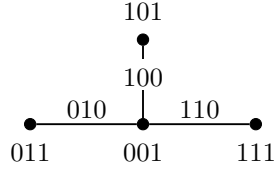
\begin{figure}[h]
    \begin{center}
        \begin{tikzpicture}[scale=0.75]
        \begin{scope}[every node/.style={circle,fill=black,inner sep=0pt, minimum size = 1.5mm,draw}]
            \node (A) [label=below:{001}] at (0,0){};
            \node (B) [label=below:{011}] at (-2,0){};
            \node (C) [label=above:{101}] at (0,1.5){};
            \node (D) [label=below:{111}] at (2,0){};
        \end{scope}
        \begin{scope}[line width = 0.25mm]
            \path (A) edge node [label=above:{010},yshift=-0.5em] {} (B);
            \path (A) edge node [minimum size=4mm,fill=white,anchor=center, pos=0.6,label={[yshift=-1.4em]:100}] {} (C);
            \path (A) edge node [label=above:{110},yshift=-0.5em] {} (D);
        \end{scope}
        \end{tikzpicture}\\
    \end{center}
    \caption{Set-sequential labeling of the only odd tree on 4 vertices} \label{fig:odd 4}
    \end{figure}
    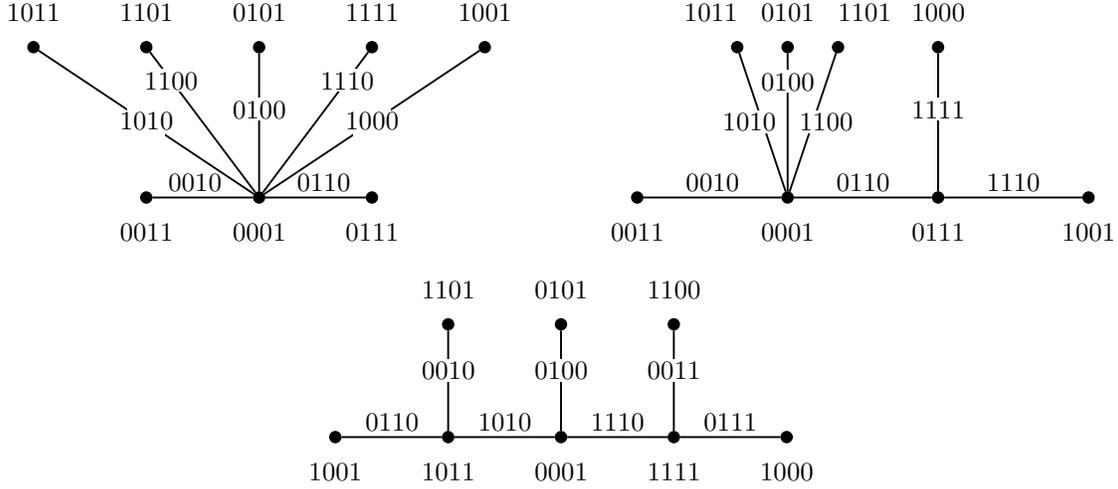
\begin{figure}[h]
    \begin{center}
        \begin{tikzpicture}
        \begin{scope}[every node/.style={circle,fill=black,inner sep=0pt, minimum size = 1.5mm,draw}]
            \node (A) [style={fill=black},label=below:{0011}] at (-1.5,0) {};
            \node (B) [style={fill=black},label=below:{0001}] at (0,0) {};
            \node (C) [style={fill=black},label=below:{0111}] at (1.5,0) {};
            \node (D) [label={above:1011}] at (-3,2) {};
            \node (E) [label={above:1101}] at (-1.5,2) {};
            \node (F) [style={fill=black},label=above:{0101}] at (0,2) {};
            \node (G) [label={above:1111}] at (1.5,2) {};
            \node (H) [label={above:1001}] at (3,2) {};
        \end{scope}
        \begin{scope}[line width = 0.25mm]
            \path (A) edge node  [label=above:{0010},yshift=-0.5em,xshift=-0.3em] {} (B);
            \path (B) edge node [label={[above,yshift=-0.5em,xshift=0.3em]:0110}] {} (C);
            \path (B) edge node [minimum size=5mm,fill=white,anchor=center, pos=0.5,label={[yshift=-1.4em]:1010}]  {} (D);
            \path (B) edge node [minimum size=3mm,fill=white,anchor=center, pos=0.8,label={[yshift=-1.2em]:1100}] {} (E);
            \path (B) edge node [minimum size=3mm,fill=white,anchor=center, pos=0.6,label={[yshift=-1.2em]:0100}]{} (F);
            \path (B) edge node [minimum size=3mm,fill=white,anchor=center, pos=0.8,label={[yshift=-1.2em]:1110}] {} (G);
            \path (B) edge node [minimum size=5mm,fill=white,anchor=center, pos=0.5,label={[yshift=-1.4em]:1000}] {} (H);
        \end{scope}
        \end{tikzpicture}
        \hspace{3em}
        \begin{tikzpicture}
        \begin{scope}[every node/.style={circle,fill=black,inner sep=0pt, minimum size = 1.5mm,draw}]
            \node (A) [style={fill=black},label=below:{0011}] at (-3,0) {};
            \node (B) [style={fill=black},label=below:{0001}] at (-1,0) {};
            \node (C) [style={fill=black},label=below:{0111}] at (1,0) {};
            \node (D) [style={fill=black},label=below:{1001}] at (3,0) {};
            \node (E) [label = {[above]:1000}] at (1,2) {};
            \node (F) [label = {[above,xshift=-1em]:1011}] at (-1.67,2) {};
            \node (G) [style={fill=black},label=above:{0101}] at (-1,2) {};
            \node (H) [label = {[above,xshift=1em]:1101}] at (-0.33,2) {};
        \end{scope}
        \begin{scope}[line width = 0.25mm]
            \path (A) edge node [label=above:{0010},yshift=-0.5em] {} (B);
            \path (B) edge node [label={[above]:0110},yshift=-0.5em] {} (C);
            \path (C) edge node [label = {[above]:1110},yshift=-0.5em] {} (D);
            \path (B) edge node [minimum size=3mm,fill=white,anchor=center, pos=0.5,label={[yshift=-1.15em,xshift=-0.5em]:1010}] {} (F);
            \path (B) edge node [minimum size=3mm,fill=white,anchor=center, pos=0.8,label={[yshift=-1.2em]:0100}] {} (G);
            \path (B) edge node [minimum size=3mm,fill=white,anchor=center, pos=0.5,label={[yshift=-1.15em,xshift=0.5em]:1100}] {} (H);
            \path (C) edge node [minimum size=3mm,fill=white,anchor=center, pos=0.6,label={[yshift=-1.2em]:1111}] {} (E);
        \end{scope}
        \end{tikzpicture}
        \begin{tikzpicture}
        \begin{scope}[every node/.style={circle,fill=black,inner sep=0pt, minimum size = 1.5mm,draw}]
            \node (A) [label=below:{1001}] at (-3,0) {};
            \node (B) [style={fill=black},label=below:{1011}] at (-1.5,0) {};
            \node (C) [style={fill=black},label=below:{0001}] at (0,0) {};
            \node (D) [style={fill=black},label=below:{1111}] at (1.5,0) {};
            \node (E) [label=below:{1000}] at (3,0) {};
            \node (F) [label=above:{1101}] at (-1.5,1.5) {};
            \node [style={fill=black},label=above:{0101}] (G) at (0,1.5) {};
            \node (H) [label=above:{1100}] at (1.5,1.5) {};
        \end{scope}
        \begin{scope}[line width = 0.25mm]
            \path (A) edge node [label=above:{0110},yshift=-0.5em] {} (B);
            \path (B) edge node [label={[above]:1010},yshift=-0.5em] {} (C);
            \path (C) edge node [label={[above]:1110},yshift=-0.5em] {} (D);
            \path (D) edge node [label=above:{0111},yshift=-0.5em] {} (E);
            \path (B) edge node [minimum size=3mm,fill=white,anchor=center, pos=0.6,label={[yshift=-1.15em]:0010}] {} (F);
            \path (C) edge node [minimum size=3mm,fill=white,anchor=center, pos=0.6,label={[yshift=-1.15em]:0100}] {} (G);
            \path (D) edge node [minimum size=3mm,fill=white,anchor=center, pos=0.6,label={[yshift=-1.15em]:0011}] {} (H);
        \end{scope}
        \end{tikzpicture}
    \end{center}
    \caption{Set-sequential labelings of the three odd trees on 8 vertices} \label{fig:odd 8}
    \end{figure}

\begin{lemma}
\label{odd tree}
    The Odd Tree Conjecture is true for \(n\leq 3\). 
\end{lemma}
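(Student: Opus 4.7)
The plan is to handle the three cases $n=1,2,3$ separately by enumeration plus explicit labelings. For $n=1$, the only tree on $2^1=2$ vertices is the edge $P_2$, which is a star and is therefore set-sequential by the fact noted in the introduction. For $n=2$, an odd tree on $4$ vertices must have degree sequence summing to $2\cdot 3=6$ with all entries odd and positive, forcing $(3,1,1,1)$; the only realization is the star $K_{1,3}$, which again is set-sequential (and whose explicit labeling is given in Figure 1).

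For $n=3$, I would first enumerate all odd trees on $8$ vertices. The degrees are positive odd integers summing to $2\cdot 7=14$, each at most $7$, and any non-leaf vertex must have degree at least $3$ since degree $2$ is even. A direct check of partitions shows the only admissible degree sequences are $(7,1,1,1,1,1,1,1)$, $(5,3,1,1,1,1,1,1)$, and $(3,3,3,1,1,1,1,1)$. For each sequence the tree is unique up to isomorphism: the first is $K_{1,7}$; in the second, the two non-leaf vertices must be adjacent (otherwise the path between them would contain a degree-$2$ vertex); in the third, the three non-leaf vertices form a subtree on three vertices, hence a path, and the leaves must attach so that the endpoints receive two leaves each and the middle receives one. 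Thus there are exactly three odd trees on $8$ vertices.

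To finish, I would exhibit a set-sequential labeling for each of these three trees, as displayed in Figure 2. Each labeling can be found by hand or by a short computer search: one places the all-ones-in-the-last-coordinate vectors on the even-component side and assigns the remaining vectors to the odd-component side, then adjusts. Verification is mechanical: check that the $8$ vertex labels and $7$ edge labels comprise all $15$ nonzero vectors in $\mathbb{F}_2^4$, and that each edge equals the mod-$2$ sum of its endpoints. The only real obstacle is producing labelings of the two non-star trees on $8$ vertices; once these are in hand (and Figure 2 records them), the lemma follows immediately.
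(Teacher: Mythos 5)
Your proposal is correct and takes essentially the same approach as the paper: enumerate the odd trees on \(4\) and \(8\) vertices and exhibit an explicit set-sequential labeling for each, which is exactly what the paper's Figures 1 and 2 supply. Your degree-sequence argument showing there are precisely three odd trees on \(8\) vertices is sound and in fact more rigorous than the paper, which asserts that count without justification.
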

\begin{proof}
    This is trivial for \(n=1\). For \(n=2\) and \(n=3\), it is sufficient to exhibit a set-sequential labeling of the edges and vertices for each odd tree on \(2\) and \(3\) vertices. For \(n=2\), there is only one odd tree on \(4\) vertices, and we may label it as in Figure \ref{fig:odd 4}. For \(n=3\), there are \(3\) odd trees on \(8\) vertices, and we may label those as in Figure \ref{fig:odd 8}.

\end{proof}
As \(n\) grows large (greater than \(3\) in fact), however, both the number of odd trees on \(2^n\) vertices and the number of possible labelings become too great to check exhaustively due to limits in computational speed. 
We do know that at least some odd trees on \(16\) vertices are set-sequential, but this is not due to a computer program but rather to methods set forth in the remainder of this section. We have studied in earnest three main operations which we present here: constructing caterpillars, splicing smaller graphs together, and using a method introduced to us in \cite{Balister}.

\subsection{Constructing Caterpillars}

As mentioned in the introduction, a 2020 paper by Golowich and Kim \cite{Golowich} presents some results regarding the set-sequentialness of caterpillars: Let \(C\) be an odd caterpillar of diameter \(k\). One result in \cite{Golowich} is that \(C\) is set-sequential if \(k\) is at most \(18\), and another is that \(C\) is set-sequential if \(2^{k-1}\leq |V(C)|\). We add to these results that an even larger class of odd caterpillars is set-sequential. Our result places some restrictions on the possible degrees of vertices along the path that serves as the ``bone'' of the caterpillar but has no limitation on the diameter.

Recall from the introduction the result by Balister et al. in \cite{Balister} that all paths of the form \(P_{2^{n}}\) except for \(P_4\) and \(P_8\) are set-sequential. Start with a path \(P_{2^k}\), with \(k>4\). From this we will construct a larger caterpillar with vertices of certain degrees that is also set-sequential. We allow the path \(P_{2^k}\) to become the ``bone'' of the caterpillar and add pendent edges in such a manner that each new vertex and the edge attaching it to the path correspond to the preceding or subsequent vertex and edge in the path.
\begin{lemma}
\label{small caterpillar}
    Let \(C_{k,3}\) denote the caterpillar on \(k\) vertices with only vertices of degrees 1 and 3. For \(n=2,3\) and \(n>4\), \(C_{2^n,3}\) is set-sequential.
\end{lemma}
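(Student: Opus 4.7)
The plan is to split into the small cases $n=2,3$ and a uniform extension construction for $n>4$.

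For $n=2,3$, note that $C_{4,3}$ is just the star $K_{1,3}$ and $C_{8,3}$ is (up to isomorphism) the unique odd caterpillar on $8$ vertices with only degrees $1$ and $3$; both are set-sequential by the explicit labelings already displayed in Figures \ref{fig:odd 4} and \ref{fig:odd 8}.

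For $n>4$ the condition $n-1\ge 4$ lets me take, by the Balister et al.\ theorem, a set-sequential labeling $L_P:V(P)\cup E(P)\to\mathbb{F}_2^n\setminus\{0\}$ of $P_{2^{n-1}}$ as input. The key structural observation is that $C_{2^n,3}$ can be realized as the caterpillar whose bone is $P_{2^{n-1}}$ with vertices $v_1,\dots,v_s$ (where $s=2^{n-1}$), such that $v_1$ is a leaf of the tree, $v_s$ carries two pendants, and each internal $v_i$ carries one pendant; a vertex count gives $s+(s-2)+2=2^n$ vertices, all of odd degree. I would then lift $L_P$ to a labeling $L$ of $C_{2^n,3}$ valued in $\mathbb{F}_2^{n+1}$ by prepending a $0$ to every path label (so the bone labels fill $\{0\}\times(\mathbb{F}_2^n\setminus\{0\})$) and reserving $\{1\}\times\mathbb{F}_2^n$ for the pendants and pendant-edges. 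Following the introduction's hint that each new element should ``correspond to'' the preceding bone element, I would set the pendant $u_i$ at bone vertex $v_i$ to be $(1,L_P(e_{i-1,i}))$, which forces the pendant edge to be $(1,L_P(v_{i-1}))$ via the sum rule. At the doubled endpoint $v_s$, one pendant would follow this same rule and the other would be assigned the leftover label $(1,0)$, with edge label $(1,L_P(v_s))$ coming out automatically from $(1,0)+(0,L_P(v_s))=(1,L_P(v_s))$.

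The main obstacle, and essentially the only real content of the proof, is the bookkeeping check that no label is repeated. The bone already fills $\{0\}\times(\mathbb{F}_2^n\setminus\{0\})$ exactly because $L_P$ is set-sequential, so it suffices to show that the pendant labels hit each vector in $\{1\}\times\mathbb{F}_2^n$ exactly once. The pendant-vertex labels contribute $(1,L_P(e))$ for every path edge $e$ together with the extra $(1,0)$, while the pendant-edge labels contribute $(1,L_P(v))$ for every path vertex $v\ne v_s$ together with the extra $(1,L_P(v_s))$; together these exhaust $\{1\}\times\mathbb{F}_2^n$. The reason this closes up so cleanly --- and the step I would pay most attention to --- is that $v_s$ is the unique bone vertex never serving as the ``preceding vertex'' of any pendant, so the single path label $L_P(v_s)$ missing from the pendant-edge labels is exactly what the extra pendant at $v_s$ supplies. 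The failure of this construction for $n=4$ is consistent with the lemma's exclusion of that case, since the would-be base path $P_8$ is not itself set-sequential.
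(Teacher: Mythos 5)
Your proof is correct and takes essentially the same route as the paper: both reduce $n=2,3$ to the explicit labelings of Lemma \ref{odd tree}, and for $n>4$ both extend a Balister-et-al.\ labeling of $P_{2^{n-1}}$ by an extra coordinate, assign to the pendant at each bone vertex the labels of the preceding bone vertex and edge (you merely swap which of the two goes on the pendant vertex versus the pendant edge, which is immaterial), and use the two leftover labels $(1,0)$ and the extended label of the last bone vertex for the extra pendant at the end. Your explicit bookkeeping that the pendant labels exhaust $\{1\}\times\mathbb{F}_2^n$ is in fact slightly more careful than the paper's.
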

\begin{proof}
    Lemma \ref{odd tree} gives that \(C_{2^2,3}\) and \(C_{2^3,3}\) are set-sequential. To show the cases where \(n>4\), we utilize Theorem 1 from \cite{Balister}: Paths of the form \(P_{2^{m}}\) for \(m\geq 4\) have a labeling.\\
    
    Observe that we may write \(C_{2^n,3}\) as the path \(P_{2^{n-1}}\) with one edge connecting each interior vertex to a single additional vertex and two edges connecting two vertices to the last vertex in the path. Suppose \(n> 4\). Take some labeling \(w_1,f_1,w_2,f_2,\ldots,f_{2^{n-1}-1},w_{2^{n-1}}\) with \(n\)-dimensional vectors for the path \(P_{2^{n-1}}\) (denoted in blue). Append a \(0\) to each vector to form vectors of dimension \((n+1)\), noting that this does not change the validity of the labeling. Label the pendent edges and outer vertices with \((n+1)\)-dimensional vectors 
    \(v_1,e_1,\ldots,v_{2^{n-1}-1},e_{2^{n-1}-1},v_{2^{n-1}},e_{2^{n-1}}\), as shown in Figure \ref{fig:C_{k,3}}. 
    \begin{figure}
    \begin{center}
        \begin{tikzpicture}
        \begin{scope}[every node/.style={circle,fill=blue,inner sep=0pt, minimum size = 1.5mm,draw}]
            \node (A) [label=below:{\textcolor{blue}{\(w_1\)}0}] at (0,0) {};
            \node (B) [label=below:{\textcolor{blue}{\(w_2\)}0}] at (1.5,0) {};
            \node (C) [label=below:{\textcolor{blue}{\(w_3\)}0}] at (3,0) {};
            \node (D) [label=below:{\textcolor{blue}{\(w_4\)}0}] at (4.5,0) {};
            \node (E) [label=below:{\textcolor{blue}{\(w_5\)}0}] at (6,0) {};
            \node (F) [label=below:{\textcolor{blue}{\(w_6\)}0}] at (7.5,0) {};
            \node (G) [label=below:{\textcolor{blue}{\(w_7\)}0}] at (9,0) {};
            \node (H) [label=below:{\textcolor{blue}{\(w_8\)}0}] at (10.5,0) {};
            \node (I) [label={[below]:\textcolor{blue}{\(w_{2^{n-1}}\)}0}] at (14.5,0) {};
            \node(inv1) at (12,0) {};
            \node(inv2) at (13,0) {};
        \end{scope}
        \begin{scope}[every node/.style={circle,fill=black,inner sep=0pt, minimum size = 1.5mm,draw}]
            \node (J) [label=below:{\(v_{2^{n-1}}\)}]at (16,0) {};
            \node (K) [label=above:{\(v_{1}\)}] at (1.5,2) {};
            \node (L) [label=above:{\(v_{2}\)}] at (3,2) {};
            \node (M) [label=above:{\(v_{3}\)}] at (4.5,2) {};
            \node (N) [label=above:{\(v_{4}\)}] at (6,2) {};
            \node (O) [label=above:{\(v_{5}\)}] at (7.5,2) {}; 
            \node (P) [label=above:{\(v_{6}\)}] at (9,2) {};
            \node (Q) [label=above:{\(v_{7}\)}] at (10.5,2) {};
            \node (R) [label={[above,yshift=-1em]:\(v_{2^{n-1}-1}\)}] at (14.5,2) {};
        \end{scope}
        \begin{scope}[line width = 0.25mm]
            \path (A) edge node [label=above:{\textcolor{blue}{\(f_1\)}0}]{} (B);
            \path (B) edge node [label=above:{\textcolor{blue}{\(f_2\)}0}]{} (C);
            \path (C) edge node [label=above:{\textcolor{blue}{\(f_3\)}0}]{} (D);
            \path (D) edge node [label=above:{\textcolor{blue}{\(f_4\)}0}]{} (E);
            \path (E) edge node [label=above:{\textcolor{blue}{\(f_5\)}0}]{} (F);
            \path (F) edge node [label=above:{\textcolor{blue}{\(f_6\)}0}]{} (G);
            \path (G) edge node [label=above:{\textcolor{blue}{\(f_7\)}0}]{} (H);
            \path (H) edge node [label=above:{\textcolor{blue}{\(f_8\)}0}] {} (inv1);
            \path [dashed] (inv1) edge node {} (inv2);
            \path (inv2) edge node [label=above:{\textcolor{blue}{\(f_{2^{n-1}-1}\)}0}]{} (I);
            \path (I) edge node [label=above:{\(e_{2^{n-1}}\)}]{} (J);
            \path (K) edge node [label=right:{\(e_{1}\)},yshift=1em]{} (B);
            \path (L) edge node [label=right:{\(e_{2}\)},yshift=1em]{} (C);
            \path (M) edge node [label=right:{\(e_{3}\)},yshift=1em]{} (D);
            \path (N) edge node [label=right:{\(e_{4}\)},yshift=1em]{} (E);
            \path (O) edge node [label=right:{\(e_{5}\)},yshift=1em]{} (F);
            \path (P) edge node [label=right:{\(e_{6}\)},yshift=1em]{} (G);
            \path (Q) edge node [label=right:{\(e_{7}\)},yshift=1em]{} (H);
            \path (R) edge node [label=right:{\(e_{2^{n-1}-1}\)},yshift=1em]{} (I);
        \end{scope}
        \end{tikzpicture}
    \end{center}
    \caption{Labeling of the pendent edges and vertices} \label{fig:C_{k,3}}
    \end{figure}
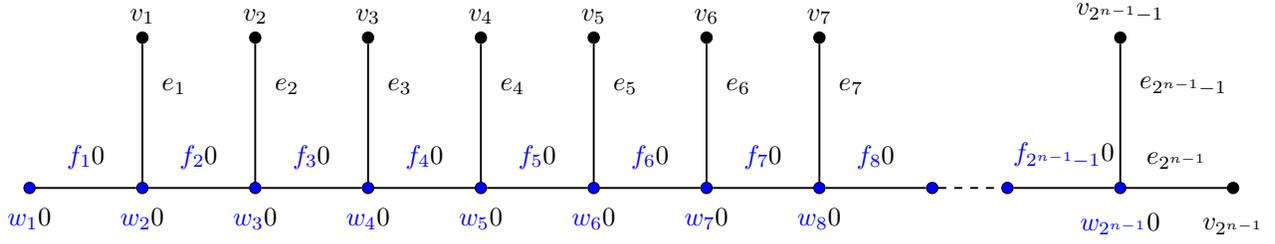
    Since \(1+1=0\) under addition modulo \(2\), \(w_i1+f_i1=w_i0+f_i0=w_{i+1}0\). So let \(v_i=w_i1\) and \(e_{i}=f_i1\). This is a good labeling for \(v_i\) and \(e_{i}\) for \(i=1,\ldots,2^{n-1}-1\). What remains is to label \(v_{2^{n-1}}\) and \(e_{2^{n-1}}\). Observe that we did not use the vector \(w_{2^{n-1}}1\) or the \((n+1)\)-dimensional vector \(0\cdots01\) for any labelings so far. Since \(w_{2^{n-1}}0+w_{2^{n-1}}1=0\cdots01\), let \(v_{2^{n-1}}=w_{2^{n-1}}1\) and \(e_{2^{n-1}}=0\cdots01\).
\end{proof}
Another more general result may be obtained in the same manner:
\begin{theorem}
    Take \(n,k>4\) with \(n\geq k\), and consider the path \(P_{2^{k-1}}\). Join \(2^{n-k+1}-1\) pendent edges and vertices to each of the \(2^{k-1}-2\) interior vertices of the path, and join an additional \(2\cdot(2^{n-k+1}-1)\) pendent edges and vertices to any one of the vertices in \(P_{2^{k-1}}\). The caterpillar on \(2^n\) vertices constructed in this way is set-sequential.
\end{theorem}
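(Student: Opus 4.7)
The plan is to generalize the construction in Lemma~\ref{small caterpillar}: use $s := n-k+1 \geq 1$ extra coordinates instead of just one, noting that $k+s = n+1$. Since $k-1 \geq 4$, Theorem~1 of \cite{Balister} yields a set-sequential labeling $w_1, f_1, w_2, \ldots, f_{2^{k-1}-1}, w_{2^{k-1}}$ of the bone $P_{2^{k-1}}$ that uses each nonzero vector of $\mathbb{F}_2^k$ exactly once. Appending $s$ zero bits to each bone label gives valid $(n+1)$-dimensional labels $(w_i,\mathbf{0})$ and $(f_i,\mathbf{0})$ on the bone, and these use precisely the nonzero $(n+1)$-dimensional vectors whose last $s$ coordinates all vanish.

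For the pendent part, I would use the following rule. For each $i \in \{1,\ldots,2^{k-1}-2\}$ and each nonzero $a \in \mathbb{F}_2^s$, attach to the interior vertex $w_{i+1}$ a pendent vertex labeled $(w_i, a)$ joined by a pendent edge labeled $(f_i, a)$. The required sum condition $(w_i, a) + (f_i, a) = (w_{i+1}, \mathbf{0})$ holds since $w_i + f_i = w_{i+1}$ in the original labeling, and each interior vertex receives the required $2^s - 1 = 2^{n-k+1}-1$ pendents. I would take the designated extra-pendent vertex to be the endpoint $w_{2^{k-1}}$. For each nonzero $a$ attach two further pendents there: one with vertex $(0,a)$ and edge $(w_{2^{k-1}}, a)$, and one with vertex $(w_{2^{k-1}-1}, a)$ and edge $(f_{2^{k-1}-1}, a)$. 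Both pairs sum to $(w_{2^{k-1}}, \mathbf{0})$, the second pair via the path identity $w_{2^{k-1}-1} + f_{2^{k-1}-1} = w_{2^{k-1}}$.

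It remains to check that every nonzero $(n+1)$-dimensional vector appears exactly once. The labels with last $s$ coordinates zero come entirely from the bone. For each fixed nonzero $a \in \mathbb{F}_2^s$, the first coordinates of the pendent labels that use $a$ are $\{w_i, f_i : 1 \leq i \leq 2^{k-1}-2\} \cup \{0,\, w_{2^{k-1}-1},\, w_{2^{k-1}},\, f_{2^{k-1}-1}\}$; since the sets $\{w_i : 1 \leq i \leq 2^{k-1}\}$ and $\{f_i : 1 \leq i \leq 2^{k-1}-1\}$ together list each nonzero element of $\mathbb{F}_2^k$ exactly once, this union exhausts $\mathbb{F}_2^k$. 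A direct count then gives $2^k(2^s-1) + (2^k-1) = 2^{n+1}-1$ distinct labels, as needed.

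The main delicate point is the pairing at the special vertex: after the ``nearest-neighbor'' labels $(w_i, a), (f_i, a)$ have been used up at each interior vertex $w_{i+1}$, the four leftover vectors in each $a$-slice must split into two pairs each summing to the label of the special vertex. Fortunately, the path identity $w_{2^{k-1}-1} + f_{2^{k-1}-1} = w_{2^{k-1}}$ makes the endpoint $w_{2^{k-1}}$ a natural choice, since $(0,a) + (w_{2^{k-1}}, a) = (w_{2^{k-1}}, \mathbf{0})$ and $(w_{2^{k-1}-1}, a) + (f_{2^{k-1}-1}, a) = (w_{2^{k-1}}, \mathbf{0})$ both hold simultaneously.
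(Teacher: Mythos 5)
Your construction and the slice-by-slice verification are correct as far as they go, but they prove only part of the theorem. The statement allows the extra \(2\cdot(2^{n-k+1}-1)\) pendants to be attached to \emph{any} vertex of \(P_{2^{k-1}}\) --- in particular to an interior vertex --- and different choices of that vertex produce non-isomorphic caterpillars, each of which must be shown set-sequential. By declaring ``I would take the designated extra-pendent vertex to be the endpoint \(w_{2^{k-1}}\),'' you have handled exactly Case 1 of the paper's proof (where your argument is complete, and in fact matches the paper's labeling up to swapping the roles of vertex and edge in the leftover pairs \((0,a)\), \((w_{2^{k-1}},a)\)). What is missing is the paper's Case 2: the special vertex is an interior vertex \(w_h\), \(2\leq h\leq 2^{k-1}-1\), which then carries \(3(2^{n-k+1}-1)\) pendants.

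The interior case does not follow from your assignment by symmetry or relabeling; it needs an additional idea. Your rule labels the pendants at every interior vertex \(w_{i+1}\) with its \emph{left} neighbor's pair \((w_i,a),(f_i,a)\), which strands the four vectors \((0,a),(w_{2^{k-1}},a),(w_{2^{k-1}-1},a),(f_{2^{k-1}-1},a)\) at the far end of the path --- they pair up correctly only at the endpoint, as your own ``delicate point'' paragraph observes. When the special vertex is an interior \(w_h\), the paper instead reverses the direction of the nearest-neighbor rule past \(w_h\): vertices \(w_\ell\) with \(\ell\leq h-1\) take their left neighbor's pair \((w_{\ell-1},a),(f_{\ell-1},a)\), vertices \(w_\ell\) with \(\ell\geq h+1\) take their \emph{right} neighbor's pair \((w_{\ell+1},a),(f_{\ell},a)\), and then exactly six vectors remain in each slice, namely \((w_{h-1},a),(f_{h-1},a),(w_h,a),(f_h,a),(w_{h+1},a),(0,a)\), which split into the three pairs
\[
(w_{h-1},a)+(f_{h-1},a)=(w_{h+1},a)+(f_h,a)=(w_h,a)+(0,a)=(w_h,\vec{0}),
\]
precisely the \(3(2^{n-k+1}-1)\) pendants needed at \(w_h\). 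Your proposal would need this second case (or an equivalent device) to prove the theorem as stated.
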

Note that choosing \(k=n\) and choosing to join the pendent edges and vertices to the last vertex in the path gives Lemma \ref{small caterpillar}. 
\begin{proof}
    As in the proof of Lemma \ref{small caterpillar}, we use that paths of the form \(P_{2^{m}}\) have a labeling for \(m\geq 4\). Note here that in the construction we propose, we have
    \[2^{k-1} + (2^{k-1}-2)\cdot(2^{n-k+1}-1) + 2\cdot(2^{n-k+1}-1)=2^{k-1}+2^{n}-2^{k-1}-2^{n-k+2}+2+2^{n-k+2}-2=2^n\]
    vertices, so this construction does in fact yield a caterpillar on \(2^n\) vertices. Suppose \(n,k>4\) and \(n\geq k\). Take some labeling \(w_1,f_1,w_2,f_2,\ldots,f_{2^{k-1}-1},w_{2^{k-1}}\) with \(k\)-dimensional vectors for the path \(P_{2^{k-1}}\). Append \(n-k+1\) zeros (denoted in general by \(0^{n-k+1}\), but here more succinctly as \(\vec{0}\)) to each vector to form vectors of dimension \((n+1)\), noting that this does not change the validity of the labeling. We will prove the theorem in two cases:
    \begin{itemize}
        \item[] Case 1: The additional \(2\cdot(2^{n-k+1}-1)\) pendent edges and vertices are added to one of the two end vertices of \(P_{2^{k-1}}\). Without loss of generality, suppose the pendent edges and vertices are added to the final vertex in the path. Let \(i=2^{n-k+1}-1, j = 2^{k-1}-2\), and label these pendent edges and vertices with \((n+1)\)-dimensional vectors \(v_1,e_1,\ldots,v_{j\cdot i+2\cdot(2^{n-k+1}-1)},e_{j\cdot i+2\cdot(2^{n-k+1}-1)}\) as in Figure \ref{fig:thm case 1}.

        \hspace{-1em}Since \(1+1=0\) under addition modulo \(2\), for any \(01\)-vector \(x\) of dimension \(n-k+1\), we have \(w_{\ell}x+f_{\ell}x=w_{\ell+1}\vec{0}\). There are \(2^{n-k+1}-1\) nonzero vectors we may construct using \(n-k+1\) symbols. Denote them by \(z_a\), for \(a=1,\ldots,2^{n-k+1}-1\). So we may express each vector \(w_{\ell}\vec{0}\), \(\ell\geq 2\) a total of \(2^{n-k+1}-1\) ways using the sums \(w_{\ell-1}z_a+f_{\ell-1}z_a=w_{\ell}\vec{0}\). Now for each of the interior vertices \(w_{\ell}\vec{0}\) of \(P_{2^{k-1}}\), let their pendent edges \(e_a\) and vertices \(v_a\) be given by \(e_a=f_{\ell-1}z_a\), \(v_a=w_{\ell-1}z_a\). We now have a set-sequential labeling using \((n+1)\)-dimensional vectors for \(w_1,f_1,\ldots,f_{2^{k-1}-1},w_{2^{k-1}}\) and for \(v_1,e_1,\ldots,v_{(j+1)\cdot i},e_{(j+1)\cdot i}\). What remains is to label \(v_{(j+1)\cdot i + 1},e_{(j+1)\cdot i + 1},\ldots v_{(j+2)\cdot i},e_{(j+2)\cdot i}\). 
        
        So far, we have not used the vectors \(w_{2^{k-1}}z_a\) or the \(0^kz_a\), for \(a=1,\ldots,2^{n-k+1}-1\). Let each of the remaining unlabeled pendent edges \(e_a\) and vertices \(v_a\) of \(w_{2^{k-1}}\vec{0}\) be given by \(e_a=0^kz_a\), \(v_a=w_{2^{k-1}}z_a\). Then for each \(v_b,e_b\) attached to \(w_{2^{k-1}}\vec{0}\), we have \(v_b+e_b=w_{2^{k-1}}\vec{0}\), finishing the required labeling for this caterpillar.
        
        \item[] Case 2: The additional \(2\cdot(2^{n-k+1}-1)\) pendent edges and vertices are added to one of the \(2^{k-1}-2\) interior vertices of \(P_{2^{k-1}}\), say, \(w_h\vec{0}\), for \(2\leq h\leq 2^{k-1}-1\). Let \(i=2^{n-k+1}-1, j = 2^{k-1}-2\), and label these pendent edges and vertices with \((n+1)\)-dimensional vectors \(v_1,e_1,\ldots,v_{j\cdot i+2\cdot(2^{n-k+1}-1)},e_{j\cdot i+2\cdot(2^{n-k+1}-1)}\) as in Figure \ref{fig:thm case 2}. Since \(1+1=0\) under addition modulo \(2\), for any \(01\)-vector \(x\) of dimension \(n-k+1\), we have \(w_{\ell}x+f_{\ell}x=w_{\ell+1}\vec{0}\). There are \(2^{n-k+1}-1\) nonzero vectors we may construct using \(n-k+1\) symbols. Denote them by \(z_a\), for \(a=1,\ldots,2^{n-k+1}-1\). So we may express each vector \(w_{\ell}\vec{0}\), \(2\leq\ell\leq2^{k-1}-1\), a total of \(2^{n-k+1}-1\) ways using either \(w_{\ell-1}z_a+f_{\ell-1}z_a=w_{\ell}\vec{0}\) or \(w_{\ell+1}z_a+f_{\ell}z_a=w_{\ell}\vec{0}\). Now for each of the interior vertices \(w_{\ell}\vec{0}\) of \(P_{2^{k-1}}\) with \(\ell\leq h-1\), let their pendent edges \(e_a\) and vertices \(v_a\) be given by \(e_a=f_{\ell-1}z_a\), \(v_a=w_{\ell-1}z_a\). For each of the interior vertices \(w_{\ell}\vec{0}\) with \(\ell\geq h+1\), let their pendent edges \(e_a\) and vertices \(v_a\) be given by \(e_a=f_{\ell}z_a\), \(v_a=w_{\ell+1}z_a\). We now have a good labeling using \((n+1)\)-dimensional vectors for \(w_1,f_1,\ldots,f_{2^{k-1}-1},w_{2^{k-1}}\) and for \(v_1,e_1,\ldots,v_{(h-2)\cdot i},e_{(h-2)\cdot i},v_{(h-1)\cdot i  +1},e_{(h-1)\cdot i +2},\ldots,v_{j\cdot i},e_{j\cdot i}\). What remains is to label \(v_{(h-2)\cdot i+1},e_{(h-2)\cdot i+1},\ldots,v_{(h-1)\cdot i},e_{(h-1)\cdot i}\) and \(v_{j\cdot i + 1},e_{j\cdot i + 1},\ldots v_{(j+2)\cdot i},e_{(j+2)\cdot i}\). 
        
        So far, the vectors \(w_{h-1}z_a\), \(f_{h-1}z_a\), \(w_{h}z_a\), \(f_hz_a\), \(w_{h+1}z_a\) or \(0^kz_a\), for \(a=1,\ldots,2^{n-k+1}-1\), have not been used. Since
        \(w_{h-1}z_a+f_{h-1}z_a=w_{h+1}z_a+f_hz_a=w_{h}z_a+0^kz_a=w_h\vec{0}\), let each pair of vectors \((v_{(h-2)\cdot i+1},e_{(h-2)\cdot i}+1),\ldots,(v_{(h-1)\cdot i},e_{(h-1)\cdot i})\) be equal to a pair \((w_{h-1}z_a,f_{h-1}z_a)\), let each pair of vectors \((v_{j\cdot i+1},e_{j\cdot i+1}),\ldots,(v_{(j+1)\cdot i},e_{(j+1)\cdot i})\) be equal to a pair \((w_{h+1}z_a,f_{h}z_a)\), and finally let each pair of vectors \((v_{(j+1)\cdot i+1},e_{(j+1)\cdot i+1}),\ldots,(v_{(j+2)\cdot i},e_{(j+2)\cdot i})\) be equal to a pair \((w_{h}z_a,0^kz_a)\). Then for each \(v_b,e_b\) attached to \(w_{h}\vec{0}\), we have \(v_b+e_b=w_{h}\vec{0}\), finishing the required labeling for this caterpillar.
    \end{itemize}
    \vspace{-1em}
\end{proof}

\begin{figure}[h]
        \begin{center}
            \begin{tikzpicture}
            \begin{scope}[every node/.style={circle,fill=blue,inner sep=0pt, minimum size = 1.5mm,draw}]
                \node (A) [label=below:{\textcolor{blue}{\(w_1\)}\(\vec{0}\)}] at (0,0) {};
                \node (B) [label=below:{\textcolor{blue}{\(w_2\)}\(\vec{0}\)}] at (1.5,0) {};
                \node (C) [label=above:{\textcolor{blue}{\(w_3\)}\(\vec{0}\)}] at (4.5,0) {};
                \node (D) at (7,0) {};
                \node (E) at (8,0) {};
                \node (G) [label={[below,xshift=-1em]:\textcolor{blue}{\(w_{2^{k-1}}\)}\(\vec{0}\)}] at (11.5,0) {};
            \end{scope}
            \begin{scope}[every node/.style={circle,fill=black,inner sep=0pt, minimum size = 1.5mm,draw}]
                \node (I) [label={\(v_1\)}] at (0,2) {};
                \node (J) [label={\(v_2\)}]  at (1,2) {};
                \node (K) [label={\(v_{i}\)}]  at (3,2) {};
                \node (L) [label={[below]:\(v_{i+1}\)}] at (3,-2) {};
                \node (M) [label={[below]:\(v_{i+2}\)}] at (4,-2) {};
                \node (N) [label={[below,yshift=-0.4em]:\(v_{2i}\)}] at (6,-2) {};
                \node (O) [label={[above,yshift=-0.5em,xshift=-1em]:\(v_{j\cdot i + 1}\)}] at (9.5,2) {};
                \node (P) [label={[above,xshift=1em,yshift=-0.5em]:\(v_{j\cdot i+2}\)}]  at (10.5,2) {};
                \node (Q) [label={[above,yshift=-0.8em]:\(v_{(j+1)\cdot i}\)}]  at (13.5,2) {};
                \node (R) [label={[right,yshift=0.5em,xshift=-2em]:\(v_{(j+1)\cdot i +1}\)}] at (14.5,0) {};
                \node (S) [label={[right,yshift=0.5em,xshift=-2em]:\(v_{(j+1)\cdot i +2}\)}] at (14.5,-1) {};
                \node (T) [label={[below,yshift=2em,xshift=-3em]:\(v_{(j+2)\cdot i}\)}] at (14.5,-3) {};
            \end{scope}
            \begin{scope}
                \node () at (2,2) {\(\cdots\)};
                \node () at (5,-2) {\(\cdots\)};
                \node () at (11.5,2) {\(\cdots\)};
                \node () at (14.5,-2) {\(\vdots\)};
            \end{scope}
            \begin{scope}[line width = 0.25mm]
                \path (A) edge node [label={[yshift=-0.5em]:\textcolor{blue}{\(f_1\)}\(\vec{0}\)}]{} (B);
                \path (B) edge node [label={[yshift=-0.5em]:\textcolor{blue}{\(f_2\)}\(\vec{0}\)}]{} (C);
                \path (C) edge node [label={[yshift=-0.5em]:\textcolor{blue}{\(f_3\)}\(\vec{0}\)}]{} (D);
                \path (D) [dashed] edge node {} (E);
                \path (E) edge node [label={[yshift=-0.5em]:\textcolor{blue}{\(f_{2^{k-1}-1}\)}\(\vec{0}\)}]{} (G);
                \path (G) edge node [label={[above,yshift=-0.5em]:\(e_{(j+1)\cdot i+1}\)}] {} (R);
                \path (G) edge node [fill=white, minimum size = 7mm,anchor=center, pos=0.5,label={[below,yshift=0.2em,xshift=0em,yshift=-0.5em]:\(e_{(j+1)\cdot i+2}\)}] {} (S);
                \path (G) edge node [label={[left,yshift=-1em]:\(e_{(j+2)\cdot i}\)}] {} (T);
                \path (G) edge node [label={[xshift=3.5em]:\(e_{(j+1)\cdot i}\)}] {} (Q);
                \path (G) edge node [label={[xshift=1em]:\(e_{j\cdot i+2}\)}] {} (P);
                \path (G) edge node [label={[xshift=-2.5em]:\(e_{j\cdot i+1}\)}] {} (O);
                \path (B) edge node [label={[left,xshift=-0.5em]:\(e_1\)}] {} (I);
                \path (B) edge node [label={[right]:\(e_2\)}] {} (J);
                \path (B) edge node [label={[right,xshift=0.5em]:\(e_i\)}] {} (K);
                \path (C) edge node [label={[left,yshift=-0.3em]:\(e_{i+1}\)}] {} (L);
                \path (C) edge node [fill=white,anchor=center, pos=0.5,label={[yshift=-1em]:\(e_{i+2}\)}] {} (M);
                \path (C) edge node [label={[right,yshift=-0.3em]:\(e_{2i}\)}] {} (N);
            \end{scope}
            \end{tikzpicture}
        \end{center}
        \caption{Labeling of the caterpillar in Case 1} \label{fig:thm case 1}
        \end{figure}
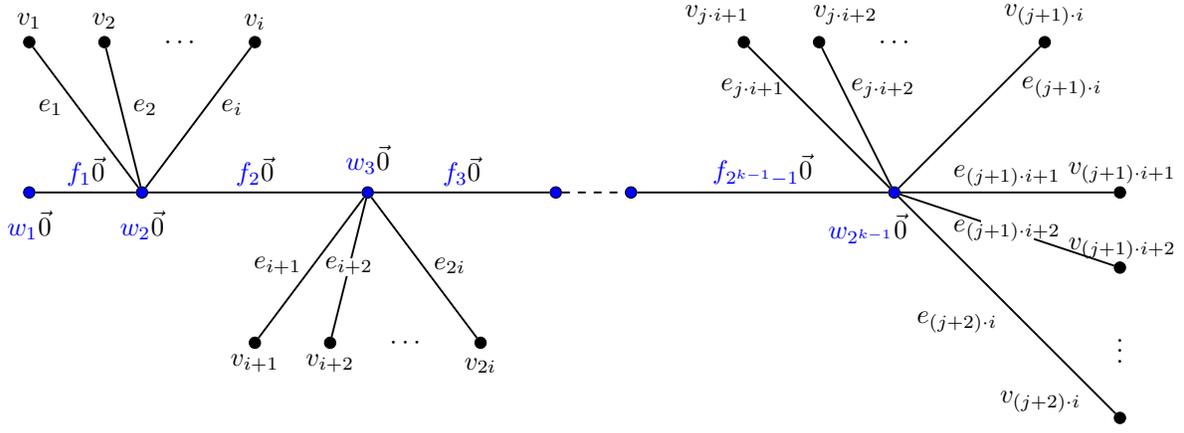
        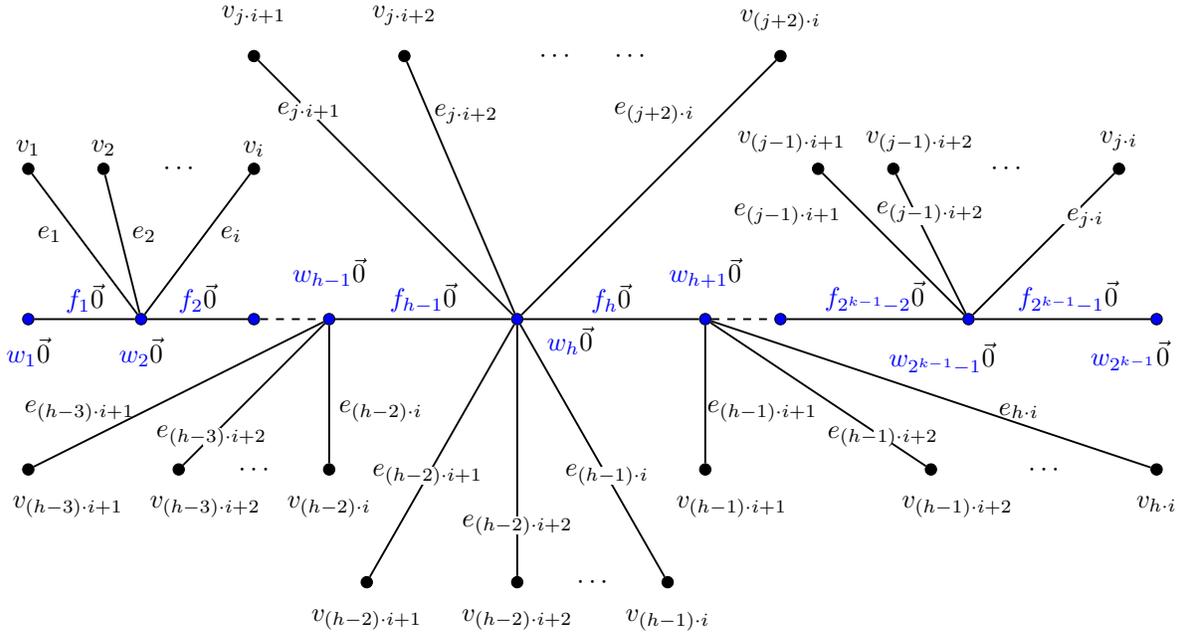
\begin{figure}[h!]
        \begin{center}
            \begin{tikzpicture}
            \begin{scope}[every node/.style={circle,fill=blue,inner sep=0pt, minimum size = 1.5mm,draw}]
                \node (A) [label=below:{\textcolor{blue}{\(w_1\)}\(\vec{0}\)}] at (0,0) {};
                \node (B) [label=below:{\textcolor{blue}{\(w_2\)}\(\vec{0}\)}] at (1.5,0) {};
                \node (C) at (3,0) {};
                \node (D) [label={[above]:\textcolor{blue}{\(w_{h-1} \)}\(\vec{0}\)}] at (4,0) {};
                \node (E) at (10,0) {};
                \node (F) [label={[above]:\textcolor{blue}{\(w_{h+1}\)}\(\vec{0}\)}] at (9,0) {};
                \node (G) [label={[below,xshift=-1em,yshift=0.4em]:\textcolor{blue}{\(w_{2^{k-1}-1}\)}\(\vec{0}\)}] at (12.5,0) {};
                \node (H) [label={[below,xshift=-1em]:\textcolor{blue}{\(w_{2^{k-1}}\)}\(\vec{0}\)}] at (15,0) {};
                \node (H1) [label={[below,yshift=0em,xshift=2em]:\textcolor{blue}{\(w_{h}\)}\(\vec{0}\)}] at (6.5,0) {};
            \end{scope}
            \begin{scope}[every node/.style={circle,fill=black,inner sep=0pt, minimum size = 1.5mm,draw}]
                \node (I) [label={\(v_1\)}] at (0,2) {};
                \node (J) [label={\(v_2\)}]  at (1,2) {};
                \node (K) [label={\(v_{i}\)}]  at (3,2) {};
                \node (O) [label={[above,yshift=-1.3em,xshift=-1em]:\(v_{(j-1)\cdot i + 1}\)}] at (10.5,2) {};
                \node (P) [label={[above,xshift=1em,yshift=-1.3em]:\(v_{(j-1)\cdot i+2}\)}]  at (11.5,2) {};
                \node (Q) [label={[above]:\(v_{j\cdot i}\)}]  at (14.5,2) {};
                \node (R) [label={[below,yshift=0em]:\(v_{(h-2)\cdot i}\)}] at (4,-2) {};
                \node (S) [label={[below,xshift=1em,yshift=0.5em]:\(v_{(h-3)\cdot i + 2}\)}] at (2,-2) {};
                \node (T) [label={[below,xshift=1.5em,yshift=0.5em]:\(v_{(h-3)\cdot i + 1}\)}] at (0,-2) {};
                \node (U) [label={[below,xshift=1em,yshift=0.5em]:\(v_{(h-1)\cdot i + 1}\)}] at (9,-2) {};
                \node (V) [label={[below,xshift=1em,yshift=0.5em]:\(v_{(h-1)\cdot i + 2}\)}] at (12,-2) {};
                \node (W) [label={[below,yshift=-0.7em]:\(v_{h\cdot i}\)}] at (15,-2) {};
                \node (X) [label={[below,yshift=0.5em]:\(v_{(h-2)\cdot i + 1}\)}] at (4.5,-3.5) {};
                \node (Y) [label={[below,yshift=0.5em]:\(v_{(h-2)\cdot i + 2}\)}] at (6.5,-3.5) {};
                \node (Z) [label={[below]:\(v_{(h-1)\cdot i}\)}] at (8.5,-3.5) {};
                \node (AA) [label={[above]:\(v_{j\cdot i + 1}\)}] at (3,3.5) {};  
                \node (AB) [label={[above]:\(v_{j\cdot i + 2}\)}] at (5,3.5) {};
                \node (AC) [label={[above,yshift=-0.5em]:\(v_{(j+2)\cdot i}\)}] at (10,3.5) {};
            \end{scope}
            \begin{scope}
                \node () at (2,2) {\(\cdots\)};
                \node () at (13,2) {\(\cdots\)};
                \node () at (3,-2) {\(\cdots\)};
                \node () at (13.5,-2) {\(\cdots\)};
                \node () at (7.5,-3.5) {\(\cdots\)};
                \node () at (8,3.5) {\(\cdots\)};
                \node () at (7,3.5) {\(\cdots\)};
            \end{scope}
            \begin{scope}[line width = 0.25mm]
                \path (A) edge node [label={[yshift=-0.5em]:\textcolor{blue}{\(f_1\)}\(\vec{0}\)}]{} (B);
                \path (B) edge node [label={[yshift=-0.5em]:\textcolor{blue}{\(f_2\)}\(\vec{0}\)}]{} (C);
                \path (C) [dashed] edge node {} (D);
                \path (F) [dashed] edge node {} (E);
                \path (E) edge node [label={[yshift=-0.5em]:\textcolor{blue}{\(f_{2^{k-1}-2}\)}\(\vec{0}\)}]{} (G);
                \path (G) edge node [label={[yshift=-0.5em,xshift=0.2em]:\textcolor{blue}{\(f_{2^{k-1}-1}\)}\(\vec{0}\)}]{} (H);
                \path (G) edge node [fill=white,anchor=center, pos=0.7,label={[yshift=-1.2em,xshift=0.5em]:\(e_{j\cdot i}\)}] {} (Q);
                \path (G) edge node [fill=white,anchor=center, pos=0.7,label={[yshift=-1em,xshift=0.5em]:\(e_{(j-1)\cdot i+2}\)}] {} (P);
                \path (G) edge node [label={[xshift=-4em]:\(e_{(j-1)\cdot i+1}\)}] {} (O);
                \path (B) edge node [label={[left,xshift=-0.5em]:\(e_1\)}] {} (I);
                \path (B) edge node [label={[right]:\(e_2\)}] {} (J);
                \path (B) edge node [label={[right,xshift=0.5em]:\(e_i\)}] {} (K);
                \path (D) edge node [label={[right,yshift=-1em]:\(e_{(h-2)\cdot i}\)}] {} (R);
                \path (D) edge node [fill=white, anchor=center, pos=0.8,label={[yshift=-1.1em]:\(e_{(h-3)\cdot i + 2}\)}] {} (S);
                \path (D) edge node [label={[left,yshift=-1em,xshift=-1.3em]:\(e_{(h-3)\cdot i + 1}\)}] {} (T);
                \path (F) edge node [label={[right,yshift=-1em,xshift=-0.3em]:\(e_{(h-1)\cdot i + 1}\)}] {} (U);
                \path (F) edge node [fill=white, anchor=center, pos=0.8,label={[yshift=-1.1em]:\(e_{(h-1)\cdot i + 2}\)}] {} (V);
                \path (F) edge node [label={[right,yshift=-1em,xshift=2.2em]:\(e_{h\cdot i }\)}] {} (W);
                \path (D) edge node [label={[yshift=-0.5em]:\textcolor{blue}{\(f_{h-1}\)}\(\vec{0}\)}] {} (H1);
                \path (H1) edge node [label={[yshift=-0.5em]:\textcolor{blue}{\(f_h\)}\(\vec{0}\)}] {} (F);
                \path (H1) edge node [fill=white,anchor=center, pos=0.6,label={[yshift=-1.1em]:\(e_{(h-2)\cdot i+1}\)}] {} (X);
                \path (H1) edge node [fill=white,anchor=center, pos=0.8,label={[yshift=-1.1em]:\(e_{(h-2)\cdot i+2}\)}] {} (Y);
                \path (H1) edge node [fill=white,anchor=center, pos=0.6,label={[yshift=-1.1em]:\(e_{(h-1)\cdot i}\)}] {} (Z);
                \path (H1) edge node [fill=white,anchor=center, pos=0.8,label={[yshift=-1.1em]:\(e_{j\cdot i+1}\)}] {} (AA);
                \path (H1) edge node [label={[yshift=1.7em,xshift=0.2em]:\(e_{j\cdot i+2}\)}] {} (AB);
                \path (H1) edge node [label={[yshift=1.7em,xshift=0.2em]:\(e_{(j+2)\cdot i}\)}] {} (AC);
            \end{scope}
            \end{tikzpicture}
        \end{center}
        \vspace{-1em}
        \caption{Labeling of the caterpillar in Case 2} \label{fig:thm case 2}
        \end{figure}
To allow for a more solid understanding of this proof, we present the following example.
\begin{example}
    Take \(n=6\), \(k=5\), and consider a caterpillar on \(64\) vertices, constructed in the manner of the above proof. We wish to show that it is set-sequential; that is, we wish to label it with the 127 nonzero \(01\)-vectors of dimension \(7\). The ``bone'' of the caterpillar is the path \(P_{16}\). We will add to each interior vertex of this path \(3\) pendent edges and vertices, and we will add to some other vertex in the path \(6\) additional pendent edges and vertices. This gives \(16\) vertices in the path, \(3\) extra vertices attached to \(14\) of the path vertices, and \(6\) extra vertices attached to some path vertex: in total, there are \(16+3\cdot 14+6=64\) vertices, with \(63\) edges. Take a labeling of \(P_{16}\) using the \(5\)-dimensional vectors \(w_1,f_2,\ldots,f_{15},w_{16}\). Figures \ref{fig:ex case 1} and \ref{fig:ex case 2} below show labelings for the caterpillar constructions in Cases 1 and 2 of the above proof, using 7-dimensional \(01\)-vectors.
    \begin{figure}[h!]
    \begin{center}
        \begin{tikzpicture}
        \begin{scope}[every node/.style={circle,fill=blue,inner sep=0pt, minimum size = 1.5mm,draw}]
            \node (A) [label=below:{\textcolor{blue}{\(w_1\)}00}] at (-1,0) {};
            \node (B) [label=below:{\textcolor{blue}{\(w_2\)}00}] at (1,0) {};
            \node (C) [label=above:{\textcolor{blue}{\(w_3\)}00}] at (4,0) {};
            \node (D) [label=below:{\textcolor{blue}{\(w_4\)}00}] at (7,0) {};
            \node (E) [label={[below,yshift=-0.3em,xshift=-0.2em]:\textcolor{blue}{\(w_{16}\)}00}] at (12,0) {};
            \node(F) [label=below:{\textcolor{blue}{\(w_5\)}00}] at (9,0) {};
            \node(G) [label={[below,yshift=-0.3em]:\textcolor{blue}{\(w_{15}\)}00}] at (10,0) {};
        \end{scope}
        \begin{scope}[every node/.style={circle,fill=black,inner sep=0pt, minimum size = 1.5mm,draw}]
            \node (J) [label=above:{\(w_1 01\)}]at (-1,2) {};
            \node (K) [label=above:{\(w_1 10\)}] at (1,2) {};
            \node (L) [label=above:{\(w_1 11\)}] at (3,2) {};
            \node (M) [label=below:{\(w_2 01\)}] at (2,-2) {};
            \node (N) [label=below:{\(w_2 10\)}] at (4,-2) {};
            \node (O) [label=below:{\(w_2 11\)}] at (6,-2) {}; 
            \node (P) [label=above:{\(w_3 01\)}] at (5,2) {};
            \node (Q) [label=above:{\(w_3 10\)}] at (7,2) {};
            \node (R) [label=above:{\(w_3 11\)}] at (9,2) {};
            \node (S) [label=above:{\(w_{15} 01\)}] at (10,2) {};
            \node (T) [label=above:{\(w_{15} 10\)}] at (12,2) {};
            \node (U) [label=above:{\(w_{15} 11\)}] at (14,2) {}; 
            \node (V) [label=right:{\(w_{16} 01\)}] at (14,0) {};
            \node (W) [label=right:{\(w_{16} 10\)}] at (14,-1) {};
            \node (X) [label=right:{\(w_{16} 11\)}] at (14,-2) {};
        \end{scope}
        \begin{scope}[line width = 0.25mm]
            \path (A) edge node [label={[yshift=-0.5em]:\textcolor{blue}{\(f_1\)}00}]{} (B);
            \path (B) edge node [label={[yshift=-0.5em]:\textcolor{blue}{\(f_2\)}00}]{} (C);
            \path (C) edge node [label={[yshift=-0.5em]:\textcolor{blue}{\(f_3\)}00}]{} (D);
            \path (D) edge node [label={[yshift=-0.5em]:\textcolor{blue}{\(f_4\)}00}]{} (F);
            \path (F) [dashed] edge node {} (G);
            \path (G) edge node [label={[yshift=-0.5em]:\textcolor{blue}{\(f_{15}\)}00}]{} (E);
            \path (B) edge node [label={\(f_1 01\)},yshift=0.5em] {} (J);
            \path (B) edge node [fill=white,minimum size=5mm,anchor=center, pos=0.6, label={[yshift=-1.5em]:\(f_1 10\)}] {} (K);
            \path (B) edge node [label={\(f_1 11\)},yshift=0.5em] {} (L);
            \path (C) edge node [label={\(f_2 01\)},yshift=-3em] {} (M);
            \path (C) edge node [fill=white,minimum size=5mm,anchor=center, pos=0.6, label={[yshift=-1.5em]:\(f_2 10\)}] {} (N);
            \path (C) edge node [label={\(f_2 11\)},yshift=-3em] {} (O);
            \path (D) edge node [label={\(f_3 01\)},yshift=0.5em] {} (P);
            \path (D) edge node [fill=white,minimum size=5mm,anchor=center, pos=0.6, label={[yshift=-1.5em]:\(f_3 10\)}] {} (Q);
            \path (D) edge node [label={\(f_3 11\)},yshift=0.5em] {} (R);
            \path (E) edge node [label={\(f_{15} 01\)},yshift=0.5em] {} (S);
            \path (E) edge node [fill=white,minimum size=5mm,anchor=center, pos=0.6, label={[yshift=-1.5em]:\(f_{15} 10\)}] {} (T);
            \path (E) edge node [label={\(f_{15} 11\)},yshift=0.5em] {} (U);
            \path (E) edge node [label={[yshift=-0.5em]:0000001}] {} (V);
            \path (E) edge node [fill=white,minimum size = 5mm,anchor=center, pos=0.6,label={[yshift=-1.4em,xshift=1em]:0000010}] {} (W);
            \path (E) edge node [fill=white,anchor=center, pos=0.6,label={[yshift=-1em]:0000011}] {} (X);
        \end{scope}
        \end{tikzpicture}
    \end{center}
    \caption{Labeling of the caterpillar in Case 1} \label{fig:ex case 1}
    \end{figure}
    \begin{figure}[h]
    \begin{center}
        \begin{tikzpicture}
        \begin{scope}[every node/.style={circle,fill=blue,inner sep=0pt, minimum size = 1.5mm,draw}]
            \node (A) [label={[below,yshift=-0.5em]:\(\textcolor{blue}{w_1} 00\)}] at (0,0) {};
            \node (B) [label={[below,yshift=-0.5em]:\(\textcolor{blue}{w_2} 00\)}] at (2,0) {};
            \node (C) at (4,0) {};
            \node (D) [label={[above,yshift=0.5em]:\(\textcolor{blue}{w_{10}}00\)}] at (5,0) {};
            \node (E) [label={[below,xshift=2.5em]:\(\textcolor{blue}{w_{11}}00\)}] at (7,0) {};
            \node (F) [label={[above,yshift=0.5em]:\(\textcolor{blue}{w_{12}}00\)}] at (9,0) {};
            \node (G) at (10,0) {};
            \node (H) [label={[below,yshift=-0.5em]:\(\textcolor{blue}{w_{15}}00\)}] at (12,0) {};
            \node (I) [label={[below,yshift=-0.5em]:\(\textcolor{blue}{w_{16}}00\)}] at (14,0) {};
        \end{scope}
        \begin{scope}[every node/.style={circle,fill=black,inner sep=0pt, minimum size = 1.5mm,draw}]
            \node (J) [label={[above]:\(w_1 01\)}] at (0,2) {};
            \node (K) [label={[above]:\(w_1 10\)}] at (2,2) {};
            \node (L) [label={[above]:\(w_1 11\)}] at (4,2) {};
            \node (M) [label={[below]:\(w_9 01\)}] at (1,-2) {};
            \node (N) [label={[below]:\(w_9 10\)}] at (3,-2) {};
            \node (O) [label={[below]:\(w_9 11\)}]at (5,-2) {};
            \node (P) [label={[below]:\(w_{10} 01\)}] at (5,-3.5) {};
            \node (Q) [label={[below]:\(w_{10} 10\)}] at (7,-3.5) {};
            \node (R) [label={[below]:\(w_{10} 11\)}] at (9,-3.5) {};
            \node (S) [label={[above]:\(w_{12} 01\)}] at (4,4) {};
            \node (T) [label={[above]:\(w_{12} 10\)}] at (5.2,4) {};
            \node (U) [label={[above]:\(w_{12} 11\)}] at (6.4,4) {};
            \node (V) [label={[above]:\(w_{11} 01\)}] at (7.6,4) {};
            \node (W) [label={[above]:\(w_{11} 10\)}] at (8.8,4) {};
            \node (X) [label={[above]:\(w_{11} 11\)}] at (10,4) {};
            \node (Y) [label={[below]:\(w_{13} 01\)}] at (9,-2) {};
            \node (Z) [label={[below]:\(w_{13} 10\)}] at (11,-2) {};
            \node (AA) [label={[below]:\(w_{13} 11\)}] at (13,-2) {};
            \node (AB) [label={[above]:\(w_{16} 01\)}] at (10,2) {};
            \node (AC) [label={[above]:\(w_{16} 10\)}] at (12,2) {};
            \node (AD) [label={[above]:\(w_{16} 11\)}] at (14,2) {};
        \end{scope}
        \begin{scope}[line width = 0.25mm]
            \path (A) edge node [label={[yshift=-0.5em]:\(\textcolor{blue}{f_1}00\)}] {} (B);
            \path (B) edge node [label={[yshift=-0.5em]:\(\textcolor{blue}{f_2}00\)}]{} (C);
            \path (C) [dashed] edge node {} (D);
            \path (D) edge node [label={[yshift=-0.5em]:\(\textcolor{blue}{f_{10}}00\)}] {} (E);
            \path (E) edge node [label={[yshift=-0.5em]:\(\textcolor{blue}{f_{11}}00\)}] {} (F);
            \path (F) [dashed] edge node {} (G);
            \path (G) edge node [label={[yshift=-0.5em]:\(\textcolor{blue}{f_{14}}00\)}] {} (H);
            \path (H) edge node [label={[yshift=-0.5em]:\(\textcolor{blue}{f_{15}}00\)}] {} (I);
            \path (B) edge node [label={[yshift=0.5em]:\(f_1 01\)}] {} (J);
            \path (B) edge node [fill=white, minimum size=5mm, anchor=center, pos=0.6,label={[yshift=-1.5em]:\(f_1 10\)}] {} (K);
            \path (B) edge node [label={[yshift=0.5em]:\(f_1 11\)}] {} (L);
            \path (D) edge node [label={[below,xshift=-3em]:\(f_9 01\)}] {} (M);
            \path (D) edge node [fill=white, minimum size=5mm, anchor=center, pos=0.6,label={[yshift=-1.5em]:\(f_9 10\)}] {} (N);
            \path (D) edge node [label={[right,yshift=-0.5em]:\(f_9 11\)}] {} (O);
            \path (E) edge node [fill=white, minimum size=5mm, anchor=center, pos=0.5,label={[yshift=-1.5em]:\(f_{10} 01\)}] {} (P);
            \path (E) edge node [fill=white, minimum size=5mm, anchor=center, pos=0.7,label={[yshift=-1.5em]:\(f_{10} 10\)}]{} (Q);
            \path (E) edge node [fill=white, minimum size=5mm, anchor=center, pos=0.5,label={[yshift=-1.5em]:\(f_{10} 11\)}] {} (R);
            \path (E) edge node [fill=white, minimum size=5mm, anchor=center, pos=0.8,label={[yshift=-1.5em]:\(f_{11} 01\)}] {} (S);
            \path (E) edge node [fill=white, minimum size=5mm, anchor=center, pos=0.7,label={[yshift=-1.5em]:\(f_{11} 10\)}] {} (T);
            \path (E) edge node [fill=white, minimum size=5mm, anchor=center, pos=0.6,label={[yshift=-1.5em]:\(f_{11} 11\)}] {} (U);
            \path (E) edge node [fill=white, minimum size=5mm, anchor=center, pos=0.7,label={[yshift=-1.5em]:\(00001\)}] {} (V);
            \path (E) edge node [fill=white, minimum size=5mm, anchor=center, pos=0.8,label={[yshift=-1.5em]:\(00010\)}] {} (W);
            \path (E) edge node [fill=white, minimum size=5mm, anchor=center, pos=0.9,label={[yshift=-1.5em]:\(00011\)}] {} (X);
            \path (F) edge node [label={[left,yshift=-0.5em]:\(f_{12} 01\)}] {} (Y);
            \path (F) edge node [fill=white, minimum size=5mm, anchor=center, pos=0.6,label={[yshift=-1.5em]:\(f_{12} 10\)}] {} (Z);
            \path (F) edge node [label={[below,xshift=3em]:\(f_{12} 11\)}] {} (AA);
            \path (H) edge node [label={[yshift=0.5em]:\(f_{15} 01\)}] {} (AB);
            \path (H) edge node [fill=white, minimum size=5mm, anchor=center, pos=0.6,label={[yshift=-1.5em]:\(f_{15} 10\)}] {} (AC);
            \path (H) edge node [label={[yshift=0.5em]:\(f_{15} 11\)}] {} (AD);
        \end{scope}
        \end{tikzpicture}
    \end{center}
    \caption{Labeling of the caterpillar in Case 2} \label{fig:ex case 2}
    \end{figure}
\end{example}

\subsection{Splicing}

Another technique that aids our goal of proving the Odd Tree Conjecture is one that constructs a large set-sequential tree from four smaller ones with an equal number of vertices. It is important that we start with \(4\) small trees rather than only two: Suppose we start with two copies \(T_1\) and \(T_2\) of a set-sequential tree on \(2^n\) vertices. From them we want to construct a tree on \(2^{n+1}\) vertices that is also set-sequential. In order to do this, we must take the vectors of dimension \(n+1\) and extend them by both a zero and a one in order to get all vectors of dimension \(n+2\) (except the vector with \(n+1\) zeros followed by a \(1\), denoted \(\vec{0}1\)). Express the trees as bipartite graphs with color classes \(X_1,Y_1\) and \(X_2,Y_2\) and edge sets \(E_1\) and \(E_2\), respectively. We have by the set-sequential nature of \(T_1\) and \(T_2\) that sums of the vectors labeling the vertices in \(X_i\) and the vectors labeling the vertices in \(Y_i\) are equal to the vectors labeling the edges in \(E_i\). Any extensions of these vector labels must preserve this, but if we only increase by one dimension, we may only extend with a zero or a one. It is not possible to do this: Extending the vectors labeling the vertices in \(X_1\) and \(Y_1\) and the edges in \(E_1\) by zero leaves that we must extend the vertices in \(X_2\) and \(Y_2\) and the edges in \(E_2\) by one, which produces not sums of zero but sums of \(\vec{0}1\). The only other option is, without loss of generality, to extend the vectors labeling the vertices in \(X_1\) and \(Y_1\) by one and the vectors labeling the edges in \(E_1\) by zero. This satisfies the sum condition for \(T_1\), but for \(T_2\) it gives again sums of \(\vec{0}1\). In order to avoid this, we consider constructing a tree on \(2^{n}\) vertices from four smaller trees on \(2^{n-2}\) vertices.

We therefore present an operation on 4 odd set-sequential trees on \(2^{k-2}\) vertices with equal bipartitions (that is, color classes must be of equal size, but edge sets may be different). We claim that this operation can be used to construct an odd set-sequential tree on \(2^k\) vertices. This operation utilizes the following definition, which we have named ``splicing.''

\begin{definition}
    Let \(G\) be a graph with some vertex \(v_1\) and some edge with endpoints \(u_1,u_2\). We define ``splicing \(v_1\) into \((u_1u_2)\)'' to be the operation that removes the edge \((u_1,u_2)\) and adds the edges \((v_1,u_1)\) and \((v_1,u_2)\).
\end{definition}

Consider three sets of splicing operations applied to four bipartite graphs with equal bipartitions (that is, the cardinalities of the color classes must be the same but the edge sets may be different):
\begin{enumerate}
    \item[] \textbf{Set 1}
    \begin{enumerate}[label=(\roman*)]
        \item \textit{Splice \(v_i\in X_i\) into \((u_{1,j}u_{2,j})\), where \(u_{1,j}\in X_j\), \(u_{2,j}\in Y_j\), and \(j\neq i\)}.
        
        \item \textit{Splice \(v_i\in X_i\) into \((u_{1,k}u_{2,k})\), where \(u_{1,k}\in X_k\), \(u_{2,k}\in Y_k\), and \(k\notin\{i,j\}\).}
        
        \item \textit{Splice \(v_i\in X_i\) into \((u_{1,\ell}u_{2,\ell})\), where \(u_{1,\ell}\in X_{\ell}\), \(u_{2,\ell}\in Y_{\ell}\), and \(\ell\notin\{i,j,k\}\).
        }
    \end{enumerate}
    \item[] \textbf{Set 2}
    \begin{enumerate}[label=(\roman*)]
        \item \textit{Splice \(v_i\in X_i\) into \((u_{1,j}u_{2,j})\), where \(u_{1,j}\in X_j\), \(u_{2,j}\in Y_j\), and \(j\neq i\).}
        \item \textit{Splice \(v_i\in X_i\) into \((u_{1,k}u_{2,k})\), where \(u_{1,k}\in X_k\), \(u_{2,k}\in Y_k\), and \(k\notin\{i,j\}\)}.
        \item \textit{Splice \(v_m\in Y_m\) into \((u_{1,\ell}u_{2,\ell})\), where \(u_{1,\ell}\in X_{\ell}\), \(u_{2,\ell}\in Y_{\ell}\), \(\ell\notin\{i,j,k\}\), and \(m\in\{j,k\}\).}
    \end{enumerate}
    
    \item[] \textbf{Set 3}
    \begin{enumerate}[label=(\roman*)]
        \item \textit{Splice \(x_i\in X_i\) into \((u_{1,j}u_{2,j})\), where \(u_{1,j}\in X_j\) and \(u_{2,j}\in Y_j\).}
        \item \textit{Splice \(v_j\in Y_j\) into \((u_{1,k}u_{2,k})\), where \(u_{1,k}\in X_k\), \(u_{2,k}\in Y_k\), and \(k\notin\{i,j\}\).}
        \item \textit{Splice \(v_k\in Y_k\) into \((u_{1,\ell}u_{2,\ell})\), where \(u_{1,\ell}\in X_{\ell}\), \(u_{2,\ell}\in Y_{\ell}\), and \(\ell\notin\{i,j,k\}\).}
    \end{enumerate}
\end{enumerate}

We claim that if the four bipartite graphs were odd set-sequential trees then the graph resulting from any of the sets of splicing operations is also an odd set-sequential tree. 

\begin{theorem}
\label{splicing}
    Take four odd set-sequential trees \(T_i\) on \(2^{k-2}\) vertices with color classes \(X_i\) and \(Y_i\), for \(i=1,2,3,4\). Suppose that \(|X_i|=|X_j|=\ell\) and \(|Y_i|=|Y_j|=m\) for \(i\neq j\). If there exists a labeling of each \(T_i\) such that \(\{v_1,v_2,\ldots,v_{\ell}\}\) labels each \(X_i\) and \(\{v_{\ell+1},v_{\ell+2},\ldots,v_{\ell+k}\}\) labels each \(Y_i\), then we may perform one of the sets of splicing operations given above to construct a set-sequential tree on \(2^k\) vertices.
\end{theorem}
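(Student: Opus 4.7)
My plan is to extend the $(k-1)$-dimensional labels of the four trees to $(k+1)$-dimensional labels by appending a $2$-bit suffix that depends on both the tree index and the color class, and then to arrange things so that the three leftover nonzero vectors of $\mathbb{F}_2^{k+1}$ are forced on the three ``new'' edges created by splicing. Concretely, choose $2$-bit suffixes $a_1,\ldots,a_4$ for $X_1,\ldots,X_4$ and $b_1,\ldots,b_4$ for $Y_1,\ldots,Y_4$ so that each of $(a_i)$, $(b_i)$, and $(a_i+b_i)$ is a permutation of $\mathbb{F}_2^2$ (for example $a=(00,01,10,11)$, $b=(00,11,01,10)$, $a+b=(00,10,11,01)$). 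Relabel each vertex of $X_i$ by appending $a_i$ to its original label, each vertex of $Y_i$ by appending $b_i$, and each original edge of $T_i$ by appending $a_i+b_i$. This preserves the sum condition inside each $T_i$, and because the four trees share the same vertex-label set, a short count shows these extended labels use exactly $2^{k+1}-4$ nonzero vectors of $\mathbb{F}_2^{k+1}$, missing only $\vec{0}01$, $\vec{0}10$, $\vec{0}11$.

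Next I would analyze what the splicing does. The three splices remove three old edges (freeing three labels) and introduce six new edges, so the resulting graph has $2^k$ vertices, $2^k-1$ edges, and remains connected (because each removed edge is a tree edge joining two components that the inserted vertex re-bridges); the spliced-in vertex's degree grows by an even amount at each splice so parity is preserved and the tree stays odd. The crucial observation is that if we splice $v\in X_i$ with original label $w$ into an edge $(u_1,u_2)$ of another tree $T_r$ with $u_1\in X_r$ \emph{chosen so that $u_1$ also carries the label $w$} (possible since the vertex-label sets are common to all four trees), then the new edge $(v,u_1)$ automatically receives $(w+w)(a_i+a_r)=\vec{0}(a_i+a_r)$. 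In Set~1, as $r$ ranges over the three other indices, the suffixes $a_i+a_r$ hit all three nonzero elements of $\mathbb{F}_2^2$, so the three ``first'' new edges collectively absorb precisely $\vec{0}01$, $\vec{0}10$, $\vec{0}11$.

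What remains is the second half of each splice: the three ``second'' new edges $(v,u_{2,r})$ carry labels $(w+u_{2,r})(a_i+b_r)$, and must coincide as a set with the three freed labels $(u_{1,r}+u_{2,r})(a_r+b_r)=(w+u_{2,r})(a_r+b_r)$. Picking each $u_{2,r}$ so that all three share one common $(k-1)$-dim label $\beta^\ast$ collapses both collections to $\{(w+\beta^\ast)\cdot s\}$, and comparing the suffix sets $\{a_i+b_r\}$ and $\{a_r+b_r\}$ with the above choice of $a,b$ one verifies they agree, giving an explicit bijection between new-edge labels and freed labels and completing the set-sequential labeling. The main obstacle is exactly the adjacency requirement that drives this matching: one needs a single label $\beta^\ast$ that appears as a neighbor of the vertex labeled $w$ in each of the three target trees simultaneously, which is not guaranteed by the hypothesis of the theorem. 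This is precisely the role of the three alternative sets of splices: Sets~2 and~3 shift which color classes supply the spliced-in vertices, which changes the suffix identity to be checked and relaxes the common-neighbor condition to a different structural constraint. The delicate part of the proof is thus the case analysis showing that, for any quadruple of trees meeting the hypothesis, at least one of Sets~1, 2, 3 (together with a valid choice of the $u_{1,r}$ and $u_{2,r}$) realizes the matching; the suffix arithmetic is essentially forced, but the existence of a workable adjacency pattern in at least one set is where the argument really lives.
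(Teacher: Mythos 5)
Your setup is exactly right and matches the paper's construction: the two-bit suffix extension with $(a_i)$, $(b_i)$, $(a_i+b_i)$ each a permutation of $\mathbb{F}_2^2$, the count showing that exactly $\vec{0}01,\vec{0}10,\vec{0}11$ are left over, and the observation that splicing a vertex labeled $w$ onto a same-labeled vertex in another tree forces $\vec{0}(a_i+a_r)$ onto the first new edge. But the final matching step has a genuine gap, and the gap is of your own making: by insisting that a \emph{single} vertex of $X_1$ (label $w$) be spliced into all three other trees, you create the common-neighbor requirement (one label $\beta^\ast$ adjacent to $w$ in $T_2$, $T_3$ and $T_4$ simultaneously), which indeed does not follow from the hypothesis. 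You then defer to an unproven ``case analysis over Sets 1, 2, 3'' that you never carry out --- and which would not obviously succeed anyway, since Sets 2 and 3 impose analogous common-structure demands. As written, the proof is incomplete at precisely the step you flag as ``where the argument really lives.''

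The fix --- and this is what the paper's figures encode --- is to drop the requirement that the three splices use the same vertex; Set 1 only requires the three spliced vertices to lie in the same color class $X_1$. Since each $T_i$ is set-sequential on $2^{k-2}$ vertices, its vertex and edge labels together exhaust $\mathbb{F}_2^{k-1}\setminus\{0\}$; the vertex label sets agree across the four trees by hypothesis, hence so do the edge label sets (their complements). So fix any edge label $e$: each $T_r$, $r=2,3,4$, has exactly one edge labeled $e$, with endpoints labeled $\alpha_r$ (in the $X$-class) and $\beta_r$ (in the $Y$-class), where $\alpha_r+\beta_r=e$ but the $\alpha_r$ need not coincide. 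Splice the vertex of $X_1$ labeled $\alpha_r$ into $T_r$'s $e$-edge (if some $\alpha_r$ coincide, the same vertex is simply spliced more than once, which Set 1 permits and which preserves odd degrees). The first new edges receive $\vec{0}(a_1+a_r)$, covering the three missing vectors; the second new edges receive $e\cdot(a_1+b_r)$, while the freed labels are $e\cdot(a_r+b_r)$, and
\[
\{a_1+b_r : r\neq 1\}=\mathbb{F}_2^2\setminus\{a_1+b_1\}=\{a_r+b_r : r\neq 1\}
\]
holds automatically from your permutation conditions. So Set 1 always works under the theorem's hypotheses and no case analysis is needed: the common object required across the three target trees is an edge \emph{label}, which exists for free, not an edge with prescribed endpoints.
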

\begin{proof} 
    The graphs resulting from the three splicing operations defined above are given in Figures \ref{fig:thm splice 1}-\ref{fig:thm splice 3} below, with the removed edge indicated in red. By examining the extensions of the color classes and the indicated edges connecting them, the graphs are seen to be set-sequential.
    \end{proof}

    \begin{figure}[h]
    \begin{center}
        \begin{tikzpicture}[scale=0.85]
            \draw[rounded corners=2.5ex] (0,0) rectangle ++(4,.9);
            \draw[rounded corners=2.5ex] (0,-1.5) rectangle ++(4,0.9);
            \draw[rounded corners=2.5ex] (7,0) rectangle ++(4,0.9);
            \draw[rounded corners=2.5ex] (7,-1.5) rectangle ++(4,0.9);
            \draw[rounded corners=2.5ex] (0,-4) rectangle ++(4,0.9);
            \draw[rounded corners=2.5ex] (0,-5.5) rectangle ++(4,0.9);
            \draw[rounded corners=2.5ex] (7,-4) rectangle ++(4,0.9);
            \draw[rounded corners=2.5ex] (7,-5.5) rectangle ++(4,0.9);
            \node at (-0.5,0.5) {10};
            \node at (-0.5,-0.3) {01};
            \node at (-0.5,-1.1) {11};
            \node at (-0.5,-3.5) {00};
            \node at (-0.5,-4.3) {00};
            \node at (-0.5,-5.1) {00};
            \node at (11.5,0.5) {01};
            \node at (11.5,-0.3) {11};
            \node at (11.5,-1.1) {10};
            \node at (11.5,-3.5) {11};
            \node at (11.5,-4.3) {10};
            \node at (11.5,-5.1) {01};
            \node at (3.5,-3.6) {\(X_1\)};
            \node at (3.5,-5.1) {\(Y_1\)};
            \node at (3.5,0.4) {\(X_2\)};
            \node at (3.5,-1.1) {\(Y_2\)};
            \node at (10.5,-3.6) {\(X_3\)};
            \node at (10.5,-5.1) {\(Y_3\)};
            \node at (10.5,0.4) {\(X_4\)};
            \node at (10.5,-1.1) {\(Y_4\)};
            \node at (2,-6) {\(T_1\)};
            \node at (2.25,1.25) {\(T_2\)};
            \node at (9,-6) {\(T_3\)};
            \node at (9,1.25) {\(T_4\)};
        \begin{scope}[every node/.style={circle,fill=black,inner sep=0pt, minimum size = 1.5mm,draw}]
            \node (A) [label={[right,xshift=0.5em]:\(v_1\)}] at (0.5,0.4) {};
            \node (B) [label={[right,xshift=0.5em]:\(v_2\)}] at (0.5,-1.1) {};
            \node (C) at (0.5,-3.6) {};
            \node (D) at (0.5,-5.1) {};
            \node (E) [label={[right,xshift=0.5em]:\(v_3\)}] at (7.5,0.4) {};
            \node (F) [label={[right,xshift=0.5em]:\(v_4\)}] at (7.5,-1.1) {};
            \node (G) [label={[right,xshift=0.5em]:\(v_5\)}] at (7.5,-3.6) {};
            \node (H) [label={[right,xshift=0.5em]:\(v_6\)}] at (7.5,-5.1) {};
            \node (I) [label={[below,yshift=-0.3em]:\(v_1\)}] at (1.25,-3.6) {};
            \node (J) [label={[below,yshift=-0.3em]:\(v_3\)}] at (2,-3.6) {};
            \node (K) [label={[below,yshift=-0.3em]:\(v_5\)}] at (2.75,-3.6) {};
        \end{scope}
        \begin{scope}[line width = 0.25mm,color=red,dashed]
            \path (A) edge node [label={[right]:\textcolor{red}{\(e\)}}] {} (B);
            \path (E) edge node [label={[left]:\textcolor{red}{\(e\)}}] {} (F);
            \path (G) edge node [label={[left]:\textcolor{red}{\(e\)}}] {} (H);
        \end{scope}
        \begin{scope}[line width = 0.25mm]
            \path (C) edge node [label={[right]\(e\)}] {} (D);
            \path (I) edge [bend left=30] node [label={[left,yshift=-2em]:\(\vec{0}10\)}] {} (A);
            \path (I) edge node [label={[right]:\(e11\)}] {} (B);
            \path (J) edge node [label={[above]:\(\vec{0}01\)}] {} (E);
            \path (J) edge node [label={[below,yshift=-0.5em]:\(e10\)}] {} (F);
            \path (K) edge [bend right=15] node [label={[above]:\(\vec{0}11\)}] {} (G);
            \path (K) edge [bend right=30]node [label={[below,yshift=-0.5em]:\(e01\)}] {} (H);
        \end{scope}
        \end{tikzpicture}
    \end{center}
    \vspace{-1em}
    \caption{Set 1 splicing operations} \label{fig:thm splice 1}
    \end{figure}
    
    \clearpage
    
    \begin{figure}[h]
    \begin{center}
        \begin{tikzpicture}[scale=0.85]
            \draw[rounded corners=2.5ex] (0,0) rectangle ++(4,0.9);
            \draw[rounded corners=2.5ex] (0,-1.5) rectangle ++(4,0.9);
            \draw[rounded corners=2.5ex] (7,0) rectangle ++(4,0.9);
            \draw[rounded corners=2.5ex] (7,-1.5) rectangle ++(4,0.9);
            \draw[rounded corners=2.5ex] (0,-4) rectangle ++(4,0.9);
            \draw[rounded corners=2.5ex] (0,-5.5) rectangle ++(4,0.9);
            \draw[rounded corners=2.5ex] (7,-4) rectangle ++(4,0.9);
            \draw[rounded corners=2.5ex] (7,-5.5) rectangle ++(4,0.9);
            \node at (-0.5,0.5) {10};
            \node at (-0.5,-0.3) {01};
            \node at (-0.5,-1.1) {11};
            \node at (-0.5,-3.5) {00};
            \node at (-0.5,-4.3) {00};
            \node at (-0.5,-5.1) {00};
            \node at (11.5,0.5) {01};
            \node at (11.5,-0.3) {11};
            \node at (11.5,-1.1) {10};
            \node at (11.5,-3.5) {11};
            \node at (11.5,-4.3) {10};
            \node at (11.5,-5.1) {01};
            \node at (3.5,-3.6) {\(X_1\)};
            \node at (3.5,-5.1) {\(Y_1\)};
            \node at (3.5,0.4) {\(X_2\)};
            \node at (3.5,-1.1) {\(Y_2\)};
            \node at (10.5,-3.6) {\(X_3\)};
            \node at (10.5,-5.1) {\(Y_3\)};
            \node at (10.5,0.4) {\(X_4\)};
            \node at (10.5,-1.1) {\(Y_4\)};
            \node at (2,-6) {\(T_1\)};
            \node at (2.25,1.25) {\(T_2\)};
            \node at (9,-6) {\(T_3\)};
            \node at (9,1.25) {\(T_4\)};
        \begin{scope}[every node/.style={circle,fill=black,inner sep=0pt, minimum size = 1.5mm,draw}]
            \node (A) [label={[right,xshift=0.5em]:\(v_1\)}] at (0.5,0.4) {};
            \node (B) [label={[right,xshift=0.5em]:\(v_2\)}] at (0.5,-1.1) {};
            \node (C) at (0.5,-3.6) {};
            \node (D) at (0.5,-5.1) {};
            \node (E) [label={[right,xshift=0.5em]:\(v_3\)}] at (7.5,0.4) {};
            \node (F) [label={[right,xshift=0.5em]:\(v_4\)}] at (7.5,-1.1) {};
            \node (G) [label={[right,xshift=0.5em]:\(v_5\)}] at (7.5,-3.6) {};
            \node (H) [label={[right,xshift=0.5em]:\(v_6\)}] at (7.5,-5.1) {};
            \node (I) [label={[below,yshift=-0.3em]:\(v_1\)}] at (1.25,-3.6) {};
            \node (J) [label={[below,yshift=-0.3em]:\(v_3\)}] at (2,-3.6) {};
            \node (K) [label={[right,xshift=0.5em]:\(v_6\)}] at (9,-1.1) {};
        \end{scope}
        \begin{scope}[line width = 0.25mm,color=red,dashed]
            \path (A) edge node [label={[right]:\textcolor{red}{\(e\)}}] {} (B);
            \path (E) edge node [label={[left]:\textcolor{red}{\(e\)}}] {} (F);
            \path (G) edge node [label={[left]:\textcolor{red}{\(e\)}}] {} (H);
        \end{scope}
        \begin{scope}[line width = 0.25mm]
            \path (C) edge node [label={[right]\(e\)}] {} (D);
            \path (I) edge [bend left=30] node [label={[left,yshift=-2em]:\(\vec{0}10\)}] {} (A);
            \path (I) edge node [label={[right]:\(e11\)}] {} (B);
            \path (J) edge node [label={[above]:\(\vec{0}01\)}] {} (E);
            \path (J) edge node [label={[below,yshift=-0.5em]:\(e10\)}] {} (F);
            \path (K) edge node [label={[left,xshift=-0.5em]:\(e01\)}] {} (G);
            \path (K) edge [bend left=15]node [label={[right,xshift=1em,yshift=2em]:\(\vec{0}11\)}] {} (H);
        \end{scope}
        \end{tikzpicture}
    \end{center}
    \vspace{-2em}
    \caption{Set 2 splicing operations} \label{fig:thm splice 2}
    \end{figure}
    \begin{figure}[h]
    \vspace{-0.5em}
    \begin{center}
        \begin{tikzpicture}[scale=0.85]
            \draw[rounded corners=2.5ex] (0,0) rectangle ++(4,0.9);
            \draw[rounded corners=2.5ex] (0,-1.5) rectangle ++(4,0.9);
            \draw[rounded corners=2.5ex] (7,0) rectangle ++(4,0.9);
            \draw[rounded corners=2.5ex] (7,-1.5) rectangle ++(4,0.9);
            \draw[rounded corners=2.5ex] (0,-4) rectangle ++(4,0.9);
            \draw[rounded corners=2.5ex] (0,-5.5) rectangle ++(4,0.9);
            \draw[rounded corners=2.5ex] (7,-4) rectangle ++(4,0.9);
            \draw[rounded corners=2.5ex] (7,-5.5) rectangle ++(4,0.9);
            \node at (-0.5,0.5) {10};
            \node at (-0.5,-0.3) {01};
            \node at (-0.5,-1.1) {11};
            \node at (-0.5,-3.5) {00};
            \node at (-0.5,-4.3) {00};
            \node at (-0.5,-5.1) {00};
            \node at (11.5,0.5) {01};
            \node at (11.5,-0.3) {11};
            \node at (11.5,-1.1) {10};
            \node at (11.5,-3.5) {11};
            \node at (11.5,-4.3) {10};
            \node at (11.5,-5.1) {01};
            \node at (3.5,-3.6) {\(X_1\)};
            \node at (3.5,-5.1) {\(Y_1\)};
            \node at (3.5,0.4) {\(X_2\)};
            \node at (3.5,-1.1) {\(Y_2\)};
            \node at (10.5,-3.6) {\(X_3\)};
            \node at (10.5,-5.1) {\(Y_3\)};
            \node at (10.5,0.4) {\(X_4\)};
            \node at (10.5,-1.1) {\(Y_4\)};
            \node at (2,-6) {\(T_1\)};
            \node at (2.25,1.25) {\(T_2\)};
            \node at (9,-6) {\(T_3\)};
            \node at (9,1.25) {\(T_4\)};
        \begin{scope}[every node/.style={circle,fill=black,inner sep=0pt, minimum size = 1.5mm,draw}]
            \node (A) [label={[right,xshift=0.5em]:\(v_1\)}] at (0.5,0.4) {};
            \node (B) [label={[right,xshift=0.5em]:\(v_2\)}] at (0.5,-1.1) {};
            \node (C) at (0.5,-3.6) {};
            \node (D) at (0.5,-5.1) {};
            \node (E) [label={[right,xshift=0.5em]:\(v_3\)}] at (7.5,0.4) {};
            \node (F) [label={[right,xshift=0.5em]:\(v_4\)}] at (7.5,-1.1) {};
            \node (G) [label={[right,xshift=0.5em]:\(v_5\)}] at (7.5,-3.6) {};
            \node (H) [label={[right,xshift=0.5em]:\(v_6\)}] at (7.5,-5.1) {};
            \node (I) [label={[below,yshift=-0.3em]:\(v_1\)}] at (1.25,-3.6) {};
            \node (J) [label={[below,yshift=-0.5em]:\(v_4\)}] at (2,-1.1) {};
            \node (K) [label={[right,xshift=0.5em]:\(v_6\)}] at (9,-1.1) {};
        \end{scope}
        \begin{scope}[line width = 0.25mm,color=red,dashed]
            \path (A) edge node [label={[right]:\textcolor{red}{\(e\)}}] {} (B);
            \path (E) edge node [label={[left]:\textcolor{red}{\(e\)}}] {} (F);
            \path (G) edge node [label={[left]:\textcolor{red}{\(e\)}}] {} (H);
        \end{scope}
        \begin{scope}[line width = 0.25mm]
            \path (C) edge node [label={[right]\(e\)}] {} (D);
            \path (I) edge [bend left=30] node [label={[left,yshift=-2em]:\(\vec{0}10\)}] {} (A);
            \path (I) edge node [label={[right]:\(e11\)}] {} (B);
            \path (J) edge node [label={[above]:\(e10\)}] {} (E);
            \path (J) edge [bend right=15] node [label={[below,yshift=-0.5em]:\(\vec{0}01\)}] {} (F);
            \path (K) edge node [label={[left,xshift=-0.5em]:\(e01\)}] {} (G);
            \path (K) edge [bend left=15]node [label={[right,xshift=1em,yshift=2em]:\(\vec{0}11\)}] {} (H);
        \end{scope}
        \end{tikzpicture}
    \end{center}
    \vspace{-2em}
    \caption{Set 3 splicing operations}\label{fig:thm splice 3}
    \end{figure}
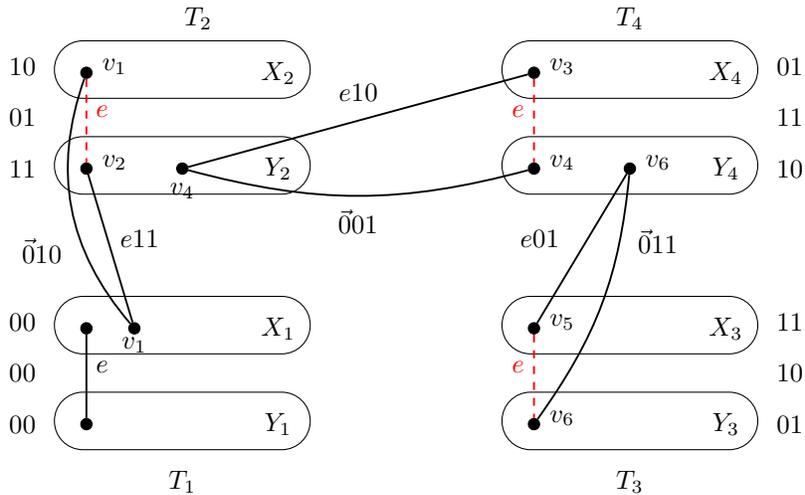

As an example of this splicing operation, we present in Figures \ref{fig:ex splice 1}-\ref{fig:ex splice 3}  some constructions of odd set-sequential trees on \(32\) vertices from four odd set-sequential trees on \(8\) vertices. Note here that these graphs are not caterpillars, nor can they be produced by the technique we will describe in the following section, so this splicing operation is indeed useful.
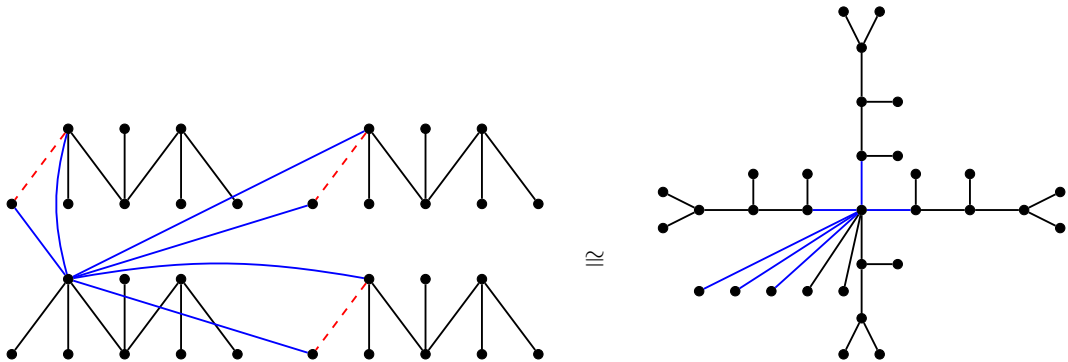
\begin{figure}[h!]
\vspace{-0.5em}
\begin{center}
    \begin{tikzpicture}[scale=0.5]
    \begin{scope}[every node/.style={circle,fill=black,inner sep=0pt, minimum size=1.25mm, draw}]
        \node (A1) at (0,0) {};
        \node (B1) at (1.5,0) {};
        \node (C1) at (3,0) {};
        \node (D1) at (-1.5,-2) {};
        \node (E1) at (0,-2) {};
        \node (F1) at (1.5,-2) {};
        \node (G1) at (3,-2) {};
        \node (H1) at (4.5,-2) {};
        \node (A2) at (8,0) {};
        \node (B2) at (9.5,0) {};
        \node (C2) at (11,0) {};
        \node (D2) at (6.5,-2) {};
        \node (E2) at (8,-2) {};
        \node (F2) at (9.5,-2) {};
        \node (G2) at (11,-2) {};
        \node (H2) at (12.5,-2) {};
        \node (A3) at (0,-4) {};
        \node (B3) at (1.5,-4) {};
        \node (C3) at (3,-4) {};
        \node (D3) at (-1.5,-6) {};
        \node (E3) at (0,-6) {};
        \node (F3) at (1.5,-6) {};
        \node (G3) at (3,-6) {};
        \node (H3) at (4.5,-6) {};
        \node (A4) at (8,-4) {};
        \node (B4) at (9.5,-4) {};
        \node (C4) at (11,-4) {};
        \node (D4) at (6.5,-6) {};
        \node (E4) at (8,-6) {};
        \node (F4) at (9.5,-6) {};
        \node (G4) at (11,-6) {};
        \node (H4) at (12.5,-6) {};
    \end{scope}
    \begin{scope}[line width = 0.25mm]
        \path (A1) edge node {} (E1);
        \path (A1) edge node {} (F1);
        \path (B1) edge node {} (F1);
        \path (C1) edge node {} (F1);
        \path (C1) edge node {} (G1);
        \path (C1) edge node {} (H1);
        \path (A2) edge node {} (E2);
        \path (A2) edge node {} (F2);
        \path (B2) edge node {} (F2);
        \path (C2) edge node {} (F2);
        \path (C2) edge node {} (G2);
        \path (C2) edge node {} (H2);
        \path (A3) edge node {} (D3);
        \path (A3) edge node {} (E3);
        \path (A3) edge node {} (F3);
        \path (B3) edge node {} (F3);
        \path (C3) edge node {} (F3);
        \path (C3) edge node {} (G3);
        \path (C3) edge node {} (H3);
        \path (A4) edge node {} (E4);
        \path (A4) edge node {} (F4);
        \path (B4) edge node {} (F4);
        \path (C4) edge node {} (F4);
        \path (C4) edge node {} (G4);
        \path (C4) edge node {} (H4);
    \end{scope}
    \begin{scope}[line width=0.25mm,color=red,dashed]
        \path (A1) edge node {} (D1);
        \path (A2) edge node {} (D2);
        \path (A4) edge node {} (D4);
    \end{scope}
    \begin{scope}[line width=0.25mm,color=blue]
        \path (A3) edge [bend left=15] node {} (A1);
        \path (A3) edge node {} (D1);
        \path (A3) edge node {} (A2);
        \path (A3) edge node {} (D2);
        \path (A3) edge [bend left=10] node {} (A4);
        \path (A3) edge node {} (D4);
    \end{scope}
    \node at (14,-3.5){\(\cong\)};
    \end{tikzpicture}
    \hspace{1em}
    \begin{tikzpicture}[scale=0.48]
    \begin{scope}[every node/.style={circle,fill=black,inner sep=0pt, minimum size=1.25mm, draw}]
        \node (A1) at (4,-14) {};
        \node (B1) at (2.5,-13) {};
        \node (C1) at (1,-14) {};
        \node (D1) at (3,-16.25) {};
        \node (E1) at (4,-13) {};
        \node (F1) at (2.5,-14) {};
        \node (G1) at (0,-13.5) {};
        \node (H1) at (0,-14.5) {};
        \node (A2) at (5.5,-12.5) {};
        \node (B2) at (6.5,-11) {};
        \node (C2) at (5.5,-9.5) {};
        \node (D2) at (2,-16.25) {};
        \node (E2) at (6.5,-12.5) {};
        \node (F2) at (5.5,-11) {};
        \node (G2) at (5,-8.5) {};
        \node (H2) at (6,-8.5) {};
        \node (A3) at (5.5,-14) {};
        \node (B3) at (6.5,-15.5) {};
        \node (C3) at (5.5,-17) {};
        \node (D3) at (4,-16.25) {};
        \node (E3) at (5,-16.25) {};
        \node (F3) at (5.5,-15.5) {};
        \node (G3) at (5,-18) {};
        \node (H3) at (6,-18) {};
        \node (A4) at (7,-14) {};
        \node (B4) at (8.5,-13) {};
        \node (C4) at (10,-14) {};
        \node (D4) at (1,-16.25) {};
        \node (E4) at (7,-13) {};
        \node (F4) at (8.5,-14) {};
        \node (G4) at (11,-13.5) {};
        \node (H4) at (11,-14.5) {};
    \end{scope}
    \begin{scope}[line width = 0.25mm]
        \path (A1) edge node {} (E1);
        \path (A1) edge node {} (F1);
        \path (B1) edge node {} (F1);
        \path (C1) edge node {} (F1);
        \path (C1) edge node {} (G1);
        \path (C1) edge node {} (H1);
        \path (A2) edge node {} (E2);
        \path (A2) edge node {} (F2);
        \path (B2) edge node {} (F2);
        \path (C2) edge node {} (F2);
        \path (C2) edge node {} (G2);
        \path (C2) edge node {} (H2);
        \path (A3) edge node {} (D3);
        \path (A3) edge node {} (E3);
        \path (A3) edge node {} (F3);
        \path (B3) edge node {} (F3);
        \path (C3) edge node {} (F3);
        \path (C3) edge node {} (G3);
        \path (C3) edge node {} (H3);
        \path (A4) edge node {} (E4);
        \path (A4) edge node {} (F4);
        \path (B4) edge node {} (F4);
        \path (C4) edge node {} (F4);
        \path (C4) edge node {} (G4);
        \path (C4) edge node {} (H4);
    \end{scope}
    \begin{scope}[line width=0.25mm,color=blue]
        \path (A3) edge node {} (A1);
        \path (A3) edge node {} (D1);
        \path (A3) edge node {} (A2);
        \path (A3) edge node {} (D2);
        \path (A3) edge node {} (A4);
        \path (A3) edge node {} (D4);
    \end{scope}
    \end{tikzpicture}
\end{center}
\vspace{-1em}
\caption{Set 1 splicing operations}\label{fig:ex splice 1}
\end{figure}
\begin{figure}[h!]
\begin{center}
\vspace{-1em}
    \begin{tikzpicture}[scale=0.5]
    \begin{scope}[every node/.style={circle,fill=black,inner sep=0pt, minimum size=1.25mm, draw}]
        \node (A1) at (0,0) {};
        \node (B1) at (1.5,0) {};
        \node (C1) at (3,0) {};
        \node (D1) at (-1.5,-2) {};
        \node (E1) at (0,-2) {};
        \node (F1) at (1.5,-2) {};
        \node (G1) at (3,-2) {};
        \node (H1) at (4.5,-2) {};
        \node (A2) at (8,0) {};
        \node (B2) at (9.5,0) {};
        \node (C2) at (11,0) {};
        \node (D2) at (6.5,-2) {};
        \node (E2) at (8,-2) {};
        \node (F2) at (9.5,-2) {};
        \node (G2) at (11,-2) {};
        \node (H2) at (12.5,-2) {};
        \node (A3) at (0,-4) {};
        \node (B3) at (1.5,-4) {};
        \node (C3) at (3,-4) {};
        \node (D3) at (-1.5,-6) {};
        \node (E3) at (0,-6) {};
        \node (F3) at (1.5,-6) {};
        \node (G3) at (3,-6) {};
        \node (H3) at (4.5,-6) {};
        \node (A4) at (8,-4) {};
        \node (B4) at (9.5,-4) {};
        \node (C4) at (11,-4) {};
        \node (D4) at (6.5,-6) {};
        \node (E4) at (8,-6) {};
        \node (F4) at (9.5,-6) {};
        \node (G4) at (11,-6) {};
        \node (H4) at (12.5,-6) {};
    \end{scope}
    \begin{scope}[line width = 0.25mm]
        \path (A1) edge node {} (E1);
        \path (A1) edge node {} (F1);
        \path (B1) edge node {} (F1);
        \path (C1) edge node {} (F1);
        \path (C1) edge node {} (G1);
        \path (C1) edge node {} (H1);
        \path (A2) edge node {} (E2);
        \path (A2) edge node {} (F2);
        \path (B2) edge node {} (F2);
        \path (C2) edge node {} (F2);
        \path (C2) edge node {} (G2);
        \path (C2) edge node {} (H2);
        \path (A3) edge node {} (D3);
        \path (A3) edge node {} (E3);
        \path (A3) edge node {} (F3);
        \path (B3) edge node {} (F3);
        \path (C3) edge node {} (F3);
        \path (C3) edge node {} (G3);
        \path (C3) edge node {} (H3);
        \path (A4) edge node {} (E4);
        \path (A4) edge node {} (F4);
        \path (B4) edge node {} (F4);
        \path (C4) edge node {} (F4);
        \path (C4) edge node {} (G4);
        \path (C4) edge node {} (H4);
    \end{scope}
    \begin{scope}[line width=0.25mm,color=red,dashed]
        \path (A1) edge node {} (D1);
        \path (A2) edge node {} (D2);
        \path (A4) edge node {} (D4);
    \end{scope}
    \begin{scope}[line width=0.25mm,color=blue]
        \path (A3) edge [bend left=15] node {} (A1);
        \path (A3) edge node {} (D1);
        \path (A3) edge node {} (A2);
        \path (A3) edge node {} (D2);
        \path (D2) edge node {} (A4);
        \path (D2) edge node {} (D4);
    \end{scope}
    \node at (14,-3.5){\(\cong\)};
    \end{tikzpicture}
    \hspace{1em}
    \begin{tikzpicture}[scale=0.48]
    \begin{scope}[every node/.style={circle,fill=black,inner sep=0pt, minimum size=1.25mm, draw}]
        \node (A1) at (2.5,-14) {};
        \node (B1) at (1,-13) {};
        \node (C1) at (-0.5,-14) {};
        \node (D1) at (3,-16.25) {};
        \node (E1) at (2.5,-13) {};
        \node (F1) at (1,-14) {};
        \node (G1) at (-1.5,-13.5) {};
        \node (H1) at (-1.5,-14.5) {};
        \node (A2) at (5.5,-12.5) {};
        \node (B2) at (6.5,-11) {};
        \node (C2) at (5.5,-9.5) {};
        \node (D2) at (4,-13) {};
        \node (E2) at (6.5,-12.5) {};
        \node (F2) at (5.5,-11) {};
        \node (G2) at (5,-8.5) {};
        \node (H2) at (6,-8.5) {};
        \node (A3) at (5.5,-14) {};
        \node (B3) at (6.5,-15.5) {};
        \node (C3) at (5.5,-17) {};
        \node (D3) at (4,-16.25) {};
        \node (E3) at (5,-16.25) {};
        \node (F3) at (5.5,-15.5) {};
        \node (G3) at (5,-18) {};
        \node (H3) at (6,-18) {};
        \node (A4) at (7,-14) {};
        \node (B4) at (8.5,-13) {};
        \node (C4) at (10,-14) {};
        \node (D4) at (4,-14) {};
        \node (E4) at (7,-13) {};
        \node (F4) at (8.5,-14) {};
        \node (G4) at (11,-13.5) {};
        \node (H4) at (11,-14.5) {};
    \end{scope}
    \begin{scope}[line width = 0.25mm]
        \path (A1) edge node {} (E1);
        \path (A1) edge node {} (F1);
        \path (B1) edge node {} (F1);
        \path (C1) edge node {} (F1);
        \path (C1) edge node {} (G1);
        \path (C1) edge node {} (H1);
        \path (A2) edge node {} (E2);
        \path (A2) edge node {} (F2);
        \path (B2) edge node {} (F2);
        \path (C2) edge node {} (F2);
        \path (C2) edge node {} (G2);
        \path (C2) edge node {} (H2);
        \path (A3) edge node {} (D3);
        \path (A3) edge node {} (E3);
        \path (A3) edge node {} (F3);
        \path (B3) edge node {} (F3);
        \path (C3) edge node {} (F3);
        \path (C3) edge node {} (G3);
        \path (C3) edge node {} (H3);
        \path (A4) edge node {} (E4);
        \path (A4) edge node {} (F4);
        \path (B4) edge node {} (F4);
        \path (C4) edge node {} (F4);
        \path (C4) edge node {} (G4);
        \path (C4) edge node {} (H4);
    \end{scope}
    \begin{scope}[line width=0.25mm,color=blue]
        \path (A3) edge node {} (A1);
        \path (A3) edge node {} (D1);
        \path (A3) edge node {} (A2);
        \path (D4) edge node {} (D2);
        \path (A3) edge node {} (A4);
        \path (A3) edge node {} (D4);
    \end{scope}
    \end{tikzpicture}
\end{center}
\vspace{-1em}
\caption{Set 2 splicing operations}\label{fig:ex splice 2}
\vspace{1em}
\end{figure}
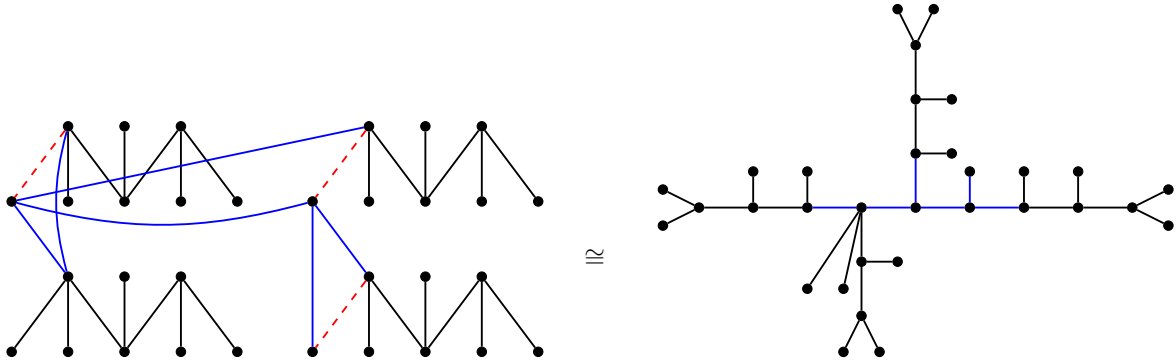
\begin{figure}[h!]
\begin{center}
    \begin{tikzpicture}[scale=0.5]
    \begin{scope}[every node/.style={circle,fill=black,inner sep=0pt, minimum size=1.25mm, draw}]
        \node (A1) at (0,0) {};
        \node (B1) at (1.5,0) {};
        \node (C1) at (3,0) {};
        \node (D1) at (-1.5,-2) {};
        \node (E1) at (0,-2) {};
        \node (F1) at (1.5,-2) {};
        \node (G1) at (3,-2) {};
        \node (H1) at (4.5,-2) {};
        \node (A2) at (8,0) {};
        \node (B2) at (9.5,0) {};
        \node (C2) at (11,0) {};
        \node (D2) at (6.5,-2) {};
        \node (E2) at (8,-2) {};
        \node (F2) at (9.5,-2) {};
        \node (G2) at (11,-2) {};
        \node (H2) at (12.5,-2) {};
        \node (A3) at (0,-4) {};
        \node (B3) at (1.5,-4) {};
        \node (C3) at (3,-4) {};
        \node (D3) at (-1.5,-6) {};
        \node (E3) at (0,-6) {};
        \node (F3) at (1.5,-6) {};
        \node (G3) at (3,-6) {};
        \node (H3) at (4.5,-6) {};
        \node (A4) at (8,-4) {};
        \node (B4) at (9.5,-4) {};
        \node (C4) at (11,-4) {};
        \node (D4) at (6.5,-6) {};
        \node (E4) at (8,-6) {};
        \node (F4) at (9.5,-6) {};
        \node (G4) at (11,-6) {};
        \node (H4) at (12.5,-6) {};
    \end{scope}
    \begin{scope}[line width = 0.25mm]
        \path (A1) edge node {} (E1);
        \path (A1) edge node {} (F1);
        \path (B1) edge node {} (F1);
        \path (C1) edge node {} (F1);
        \path (C1) edge node {} (G1);
        \path (C1) edge node {} (H1);
        \path (A2) edge node {} (E2);
        \path (A2) edge node {} (F2);
        \path (B2) edge node {} (F2);
        \path (C2) edge node {} (F2);
        \path (C2) edge node {} (G2);
        \path (C2) edge node {} (H2);
        \path (A3) edge node {} (D3);
        \path (A3) edge node {} (E3);
        \path (A3) edge node {} (F3);
        \path (B3) edge node {} (F3);
        \path (C3) edge node {} (F3);
        \path (C3) edge node {} (G3);
        \path (C3) edge node {} (H3);
        \path (A4) edge node {} (E4);
        \path (A4) edge node {} (F4);
        \path (B4) edge node {} (F4);
        \path (C4) edge node {} (F4);
        \path (C4) edge node {} (G4);
        \path (C4) edge node {} (H4);
    \end{scope}
    \begin{scope}[line width=0.25mm,color=red,dashed]
        \path (A1) edge node {} (D1);
        \path (A2) edge node {} (D2);
        \path (A4) edge node {} (D4);
    \end{scope}
    \begin{scope}[line width=0.25mm,color=blue]
        \path (A3) edge [bend left=15] node {} (A1);
        \path (A3) edge node {} (D1);
        \path (D1) edge node {} (A2);
        \path (D1) edge [bend right=15] node {} (D2);
        \path (D2) edge node {} (A4);
        \path (D2) edge node {} (D4);
    \end{scope}
    \node at (14,-3.5){\(\cong\)};
    \end{tikzpicture}
    \hspace{1em}
    \begin{tikzpicture}[scale=0.48]
    \begin{scope}[every node/.style={circle,fill=black,inner sep=0pt, minimum size=1.25mm, draw}]
        \node (A1) at (4,-14) {};
        \node (B1) at (2.5,-13) {};
        \node (C1) at (1,-14) {};
        \node (D1) at (8.5,-14) {};
        \node (E1) at (4,-13) {};
        \node (F1) at (2.5,-14) {};
        \node (G1) at (0,-13.5) {};
        \node (H1) at (0,-14.5) {};
        \node (A2) at (7,-12.5) {};
        \node (B2) at (8,-11) {};
        \node (C2) at (7,-9.5) {};
        \node (D2) at (8.5,-13) {};
        \node (E2) at (8,-12.5) {};
        \node (F2) at (7,-11) {};
        \node (G2) at (6.5,-8.5) {};
        \node (H2) at (7.5,-8.5) {};
        \node (A3) at (5.5,-14) {};
        \node (B3) at (6.5,-15.5) {};
        \node (C3) at (5.5,-17) {};
        \node (D3) at (4,-16.25) {};
        \node (E3) at (5,-16.25) {};
        \node (F3) at (5.5,-15.5) {};
        \node (G3) at (5,-18) {};
        \node (H3) at (6,-18) {};
        \node (A4) at (10,-14) {};
        \node (B4) at (11.5,-13) {};
        \node (C4) at (13,-14) {};
        \node (D4) at (7,-14) {};
        \node (E4) at (10,-13) {};
        \node (F4) at (11.5,-14) {};
        \node (G4) at (14,-13.5) {};
        \node (H4) at (14,-14.5) {};
    \end{scope}
    \begin{scope}[line width = 0.25mm]
        \path (A1) edge node {} (E1);
        \path (A1) edge node {} (F1);
        \path (B1) edge node {} (F1);
        \path (C1) edge node {} (F1);
        \path (C1) edge node {} (G1);
        \path (C1) edge node {} (H1);
        \path (A2) edge node {} (E2);
        \path (A2) edge node {} (F2);
        \path (B2) edge node {} (F2);
        \path (C2) edge node {} (F2);
        \path (C2) edge node {} (G2);
        \path (C2) edge node {} (H2);
        \path (A3) edge node {} (D3);
        \path (A3) edge node {} (E3);
        \path (A3) edge node {} (F3);
        \path (B3) edge node {} (F3);
        \path (C3) edge node {} (F3);
        \path (C3) edge node {} (G3);
        \path (C3) edge node {} (H3);
        \path (A4) edge node {} (E4);
        \path (A4) edge node {} (F4);
        \path (B4) edge node {} (F4);
        \path (C4) edge node {} (F4);
        \path (C4) edge node {} (G4);
        \path (C4) edge node {} (H4);
    \end{scope}
    \begin{scope}[line width=0.25mm,color=blue]
        \path (A3) edge node {} (A1);
        \path (A3) edge node {} (D4);
        \path (D4) edge node {} (A2);
        \path (D1) edge node {} (D2);
        \path (A4) edge node {} (D1);
        \path (D4) edge node {} (D1);
    \end{scope}
    \end{tikzpicture}
\end{center}
\vspace{-1em}
\caption{Set 3 splicing operations}\label{fig:ex splice 3}
\end{figure}

What follows now is an example showing that in Theorem \ref{splicing} the four smaller bipartite graphs must have the same labeling set for each upper and lower color class in order to extend a set-sequential labeling via the splicing operations defined.
\begin{example}
\label{color classes}
    Let \(R\) and \(S\) be the trees given in Figure \ref{fig:color classes}. Label \(R\) and \(S\) using the labelings given in Lemma \ref{odd tree}, noting that the vertices in the upper class of \(R\) are labeled with the vectors \(\{0001,1000,1001\}\) and the vertices in the upper class of \(S\) are labeled with the vectors \(\{0101,1011,1111\}\). Let \(T_1=T_2=T_3=R\) and \(T_4=S\). Before attempting any splicing, we must extend the vectors in each \(T_i\) by \(00\), \(01\), \(10\), or \(11\). Doing so, however, does not produce entirely distinct vectors. Extend the color classes \(X_1, Y_1\) of \(T_1\) by \(00\), \(X_2,Y_4\) by \(01\), \(X_3,Y_2\) by \(10\), and \(X_4,Y_3\) by \(11\) (or some similar arrangement). Then \(E_1\) is extended by \(00\), \(E_2\) by \(11\), \(E_3\) by \(01\), and \(E_4\) by \(10\). Extending in this way, we find \(0111\) extended by \(00\), \(10\), and \(11\), since it is in \(Y_1,Y_2,Y_3\). But in \(T_4\), \(0111\) is an edge, so it is extended by \(10\). This method of extension, then, gives that the vector \(011110\) is used twice in the larger tree on \(32\) vertices, which prevents that tree from being set-sequential. It is thus not enough that the bipartitons of each \(T_i\) are equal - we must assume that their upper and lower color classes have the same labeling set.
\end{example}
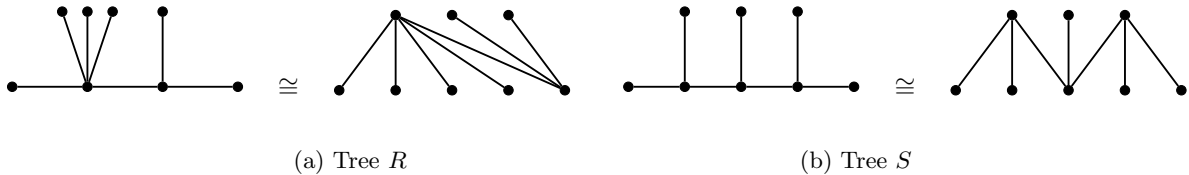
\begin{figure}[h!]
\begin{center}
    \begin{subfigure}[b]{0.4\textwidth}
    \begin{center}
    \begin{tabular}{c}
    \hspace{-5em}
        \begin{tikzpicture}[scale=0.5]
        \begin{scope}[every node/.style={circle,fill=black,inner sep=0pt, minimum size = 1.25mm,draw}]
            \node (A) at (-3,0) {};
            \node (B) at (-1,0) {};
            \node (C) at (1,0) {};
            \node (D) at (3,0) {};
            \node (E) at (1,2) {};
            \node (F) at (-1.67,2) {};
            \node (G) at (-1,2) {};
            \node (H) at (-0.33,2) {};
        \end{scope}
        \begin{scope}[line width = 0.25mm]
            \path (A) edge node {} (B);
            \path (B) edge node {} (C);
            \path (C) edge node {} (D);
            \path (B) edge node {} (F);
            \path (B) edge node {} (G);
            \path (B) edge node {} (H);
            \path (C) edge node {} (E);
        \end{scope}
        \end{tikzpicture}
        \(\quad\cong\quad\)
        \begin{tikzpicture}[scale=0.5]
        \begin{scope}[every node/.style={circle,fill=black,inner sep=0pt, minimum size = 1.25mm,draw}]
            \node (A) at (-3,0) {};
            \node (B) at (-1.5,0) {};
            \node (C) at (0,0) {};
            \node (D) at (1.5,0) {};
            \node (E) at (3,0) {};
            \node (F) at (-1.5,2) {};
            \node (G) at (0,2) {};
            \node (H) at (1.5,2) {};
        \end{scope}
        \begin{scope}[line width = 0.25mm]
            \path (A) edge node {} (F);
            \path (B) edge node {} (F);
            \path (C) edge node {} (F);
            \path (D) edge node {} (F);
            \path (E) edge node {} (F);
            \path (E) edge node {} (G);
            \path (E) edge node {} (H);
        \end{scope}
        \end{tikzpicture}
    \end{tabular}
    \end{center}
    \caption{Tree \(R\)} 
    \end{subfigure}
    \begin{subfigure}[b]{0.4\textwidth}
    \begin{center}
    \begin{tabular}{c}
        \begin{tikzpicture}[scale=0.5]
        \begin{scope}[every node/.style={circle,fill=black,inner sep=0pt, minimum size = 1.25mm,draw}]
            \node (A) at (-3,0) {};
            \node (B) at (-1.5,0) {};
            \node (C) at (0,0) {};
            \node (D) at (1.5,0) {};
            \node (E) at (3,0) {};
            \node (F) at (-1.5,2) {};
            \node (G) at (0,2) {};
            \node (H) at (1.5,2) {};
        \end{scope}
        \begin{scope}[line width = 0.25mm]
            \path (A) edge node {} (B);
            \path (B) edge node {} (C);
            \path (C) edge node {} (D);
            \path (D) edge node {} (E);
            \path (B) edge node {} (F);
            \path (C) edge node {} (G);
            \path (D) edge node {} (H);
        \end{scope}
        \end{tikzpicture}
        \(\quad\cong\quad\)
        \begin{tikzpicture}[scale=0.5]
        \begin{scope}[every node/.style={circle,fill=black,inner sep=0pt, minimum size = 1.25mm,draw}]
            \node (A) at (-3,0) {};
            \node (B) at (-1.5,0) {};
            \node (C) at (0,0) {};
            \node (D) at (1.5,0) {};
            \node (E) at (3,0) {};
            \node (F) at (-1.5,2) {};
            \node (G) at (0,2) {};
            \node (H) at (1.5,2) {};
        \end{scope}
        \begin{scope}[line width = 0.25mm]
            \path (A) edge node {} (F);
            \path (B) edge node {} (F);
            \path (C) edge node {} (F);
            \path (C) edge node {} (G);
            \path (C) edge node {} (H);
            \path (D) edge node {} (H);
            \path (E) edge node {} (H);
        \end{scope}
        \end{tikzpicture}
    \end{tabular}
    \end{center}
    \caption{Tree \(S\)}
    \end{subfigure}
\end{center}
\vspace{-1em}
\caption{Trees used in Example \ref{color classes}}
\label{fig:color classes}
\end{figure}
\clearpage
Though we can use this splicing technique to construct many odd trees, we cannot use it to produce all odd trees. 
\begin{counterexample}
    Consider an odd tree on \(n\) vertices with one vertex of degree of at least \(\frac{n}{2}\). If we take four small trees and perform one of the sets of splicing operations, the maximum degree in each small tree will be \(\frac{n}{4}\). We are adding only \(6\) edges to the graph, however. This means that we can not produce a vertex of degree \(\frac{n}{2}\) when \(n\) is large.
\end{counterexample}
How, then, can we produce labelings for graphs like the one in the above counterexample or for other graphs that cannot be obtained via splicing? We introduce the following section on partitions of \(\mathbb{F}_2^n\), motivated by work done in \cite{Balister}, to provide additional ways to construct odd set-sequential trees.

\subsection{Partitioning \(\mathbb{F}_2^n\)}

One conjecture that is of significant interest with respect to the Odd Tree Conjecture is Conjecture 1 in \cite{Balister}:

\begin{conjecture} \textnormal{(Balister et al.  \cite{Balister})} 
    \textit{Given \(2^{n-1}\) non-zero (not necessarily distinct) vectors \(v_1,\ldots,v_{2^{n-1}}\in\mathbb{F}_2^n\), \(n\geq 2\), with \(\sum_{i=1}^{2^{n-1}}v_i=0\), there exists a partition of \(\mathbb{F}_2^n\) into pairs of vectors \(\{p_i,q_i\}\), \(i=1,\ldots,2^{n-1}\) such that for all \(i\), \(v_i=p_i-q_i\)}.
\end{conjecture}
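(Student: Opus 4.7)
The natural plan is proof by induction on \(n\). The base case \(n=2\) is immediate: the two non-zero vectors \(v_1,v_2\) summing to \(0\) must be equal, and any pair \(\{p,p+v_1\}\) together with its complement inside \(\mathbb{F}_2^2\) provides the required partition.

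For the inductive step I would split on whether the \(v_i\)'s span \(\mathbb{F}_2^n\). If they all lie in a hyperplane \(H\), then each pair \(\{p,p+v_i\}\) is confined to a single coset of \(H\), so fixing \(w\notin H\) the task reduces to partitioning the index multiset into two halves of size \(2^{n-2}\), each summing to \(0\); the inductive hypothesis applied inside \(H\) and \(H+w\) (both identified with \(\mathbb{F}_2^{n-1}\)) then finishes the job. The key sub-claim is that any multiset of \(2^{n-1}\) non-zero vectors in \(\mathbb{F}_2^{n-1}\) summing to \(0\) contains a sub-multiset of size \(2^{n-2}\) summing to \(0\); since \(\mathbb{F}_2^{n-1}\) has only \(2^{n-1}-1\) non-zero elements, pigeonhole forces repetitions, and repeated elements can be zeroed out in pairs, which should yield the sub-claim by a short auxiliary induction.

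If instead the \(v_i\)'s span \(\mathbb{F}_2^n\), one can try reducing modulo \(\langle v_1\rangle\) by setting \(\{p_1,q_1\}=\{0,v_1\}\) and projecting the remaining \(v_j\)'s into \(\mathbb{F}_2^n/\langle v_1\rangle\cong\mathbb{F}_2^{n-1}\). The trouble is that this projection produces zero vectors (whenever \(v_j=v_1\)) and forced repetitions, so the inductive statement cannot be invoked verbatim. An alternative is an exchange argument exploiting the identity that swapping the pairs \(\{a,b\},\{c,d\}\) for \(\{a,c\},\{b,d\}\) changes the two pair-sums by the same offset \(b+c\); one would hope to connect any valid sum-multiset to a canonical one (for instance the constant multiset \((e,e,\ldots,e)\) obtained by pairing across a hyperplane) through a sequence of such swaps. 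The main obstacle is precisely this spanning case; it is almost certainly why the conjecture has remained open, and a successful resolution will likely demand either a more delicate structural decomposition of the \(v_i\) multiset or a global algebraic tool (for instance the polynomial method) rather than the naive inductive scheme sketched above.
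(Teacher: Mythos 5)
You should first be aware of the status of this statement: it is labeled a conjecture in the paper because it \emph{is} one. This is Conjecture 1 of Balister, Gy\H{o}ri, and Schelp \cite{Balister}, which the paper explicitly describes as ``yet unproven,'' known only for \(n\le 5\) and for special families of the \(v_i\); the only thing the paper itself proves in this direction is the special case (Theorem 4 of \cite{Balister}, reproduced with a corrected proof at the end of Section 2) in which \(v_1=\cdots=v_{2^{n-2}}\) and the remaining vectors are equal in consecutive pairs, handled by a greedy counting argument over a fixed pairing \(\{r_m,s_m\}\) with \(r_m+s_m=v_1\). So there is no full proof in the paper to compare yours against, and your attempt has to stand on its own as a solution to an open problem --- which, as you yourself acknowledge, it is not.

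The gaps are concrete. The fatal one is the spanning case: your remarks about quotienting by \(\langle v_1\rangle\) (which produces zero vectors and so breaks the hypothesis) and about swap moves are accurate diagnoses of why naive induction fails, but they are diagnoses, not an argument. Less obviously, your hyperplane case is also incomplete, because the key sub-claim --- every multiset of \(2^{m}\) nonzero vectors of \(\mathbb{F}_2^{m}\) with zero sum contains a zero-sum sub-multiset of size exactly \(2^{m-1}\) --- is itself nontrivial, and the mechanism you propose (``pigeonhole forces repetitions, and repeated elements can be zeroed out in pairs'') does not deliver it. Consider, in \(\mathbb{F}_2^{4}\), the multiset consisting of \(a=0001\), \(b=0010\), \(c=0100\) each with multiplicity \(3\), plus seven further distinct nonzero vectors chosen so the total sum vanishes (for instance \(0110,1010,1011,1100,1101,1110,1111\)). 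Here at most three disjoint equal pairs exist, contributing only \(6\) elements, while the remaining ten elements are pairwise distinct; so no half of size \(8\) can be assembled from equal pairs, and one must instead find mixed zero-sum combinations such as \(\{b,b,b,c,c,c,1010,1100\}\). A zero-sum half does exist in this example, but producing one in general is an Erd\H{o}s--Ginzburg--Ziv-type problem, not a ``short auxiliary induction.'' In summary: your instincts about where the difficulty sits are right, but both branches of your induction contain unfilled holes, and the statement remains open.
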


Some cases of this conjecture have been proven, in both \cite{Balister} and \cite{Golowich} (one result of particular note is that the conjecture is true for \(n\leq 5\)), though the general statement is yet unproven. This conjecture is of interest to our work because it gives us a way to begin with some odd set-sequential tree on \(2^n\) vertices and add \(2^n\) more vertices in such a way that the resulting tree on \(2^{n+1}\) vertices is still both odd and set-sequential: 
\begin{example}
    \label{base tree}
    Consider the tree \(T\) defined in Figure \ref{fig:ex tree}. This graph is not a caterpillar, nor can it be obtained via splicing, but we can use a special case of Conjecture 1 to prove that it is set-sequential. Note first that this \(T\) is one of the odd trees on \(8\) vertices with \(4\) vertices to which two additional vertices are joined. The odd tree on \(8\) vertices is given in blue in Figure \ref{fig:ex base tree}.
    
    In the larger \(16\) vertex tree, label \(v_1,\ldots,v_8\) using the labeling of that odd tree on \(8\) vertices (denoted \(k_1,\ldots,k_8\)) extended by \(0\). Even though Conjecture 1 in \cite{Balister} is yet unproven, it is known to be true for \(n\leq 5\). So we may partition \(\mathbb{F}_2^4\) into quadruples of vectors \((p_i,q_i,\ell_i,m_i\)) such that \(k_i=p_i+q_i={\ell}_i+m_i\), for \(i=1,2,3,4\). (This is accomplished by letting \(k_1=k_2\), \(k_3=k_4\), \(k_5=k_6\), and \(k_7=k_8\) in Conjecture 1 of \cite{Balister}.) Without loss of generality, label the \(8\) appended vertices with \(p_i1\) and \(\ell_i1\) and the edges with \(q_i1\) and \(m_i1\). Then since \(k_i=p_i+q_i=\ell_i+m_i\), we have \(k_i0=p_i1+q_i1=\ell_i1+m_i1\). We know that the sum condition for the 8-vertex subgraph is satisfied as well since we have merely extended it by \(0\). Therefore \(T\) is set-sequential.
    \begin{figure}[h]
    \begin{center}
        \begin{subfigure}[b]{0.3\textwidth}
            \begin{tikzpicture}[scale=0.5]
            \begin{scope}[every node/.style={circle,fill=black,inner sep=0pt, minimum size = 1.25mm,draw}]
                \node (A) at (0,0) {};
                \node (B) at (2,0) {};
                \node (C) at (4,0) {};
                \node (D) at (6,0) {};
                \node (E) at (8,0) {};
                \node (F) at (0,2) {};
                \node (G) at (2,2) {};
                \node (H) at (4,2) {};
                \node (I) at (6,2) {};
                \node (J) at (8,2) {};
                \node (K) at (0,4) {};
                \node (L) at (2,4) {};
                \node (M) at (3,4) {};
                \node (N) at (5,4) {};
                \node (O) at (6,4) {};
                \node (P) at (8,4) {};
            \end{scope}
            \begin{scope}[line width=0.25mm]
                \path (A) edge node {} (B);
                \path (B) edge node {} (C);
                \path (C) edge node {} (D);
                \path (D) edge node {} (E);
                \path (B) edge node {} (F);
                \path (C) edge node {} (G);
                \path (C) edge node {} (H);
                \path (C) edge node {} (I);
                \path (D) edge node {} (J);
                \path (G) edge node {} (K);
                \path (G) edge node {} (L);
                \path (H) edge node {} (M);
                \path (H) edge node {} (N);
                \path (I) edge node {} (O);
                \path (I) edge node {} (P);
            \end{scope}
            \end{tikzpicture}
        \caption{Tree \(T\)}\label{fig:ex tree}
        \end{subfigure}
        \begin{subfigure}[b]{0.3\textwidth}
            \begin{tikzpicture}[scale=0.5]
            \begin{scope}[every node/.style={circle,fill=black,inner sep=0pt, minimum size = 1.25mm,draw}]
                \node (A) at (0,0) {};
                \node (F) at (0,2) {};
                \node (K) at (0,4) {};
                \node (L) at (2,4) {};
                \node (M) at (3,4) {};
                \node (N) at (5,4) {};
                \node (O) at (6,4) {};
                \node (P) at (8,4) {};
            \end{scope}
            \begin{scope}[every node/.style={circle,fill=blue,inner sep=0pt, minimum size = 1.25mm,draw}]
                \node (B) [label={[right,xshift=0.5em,yshift=0.5em]:\(v_1\)}] at (2,0) {};
                \node (C) at (4,0) {};
                \node (D) at (6,0) {};
                \node (E) at (8,0) {};
                \node (G) [label={[right,xshift=0.5em,yshift=-0.5em]:\(v_2\)}] at (2,2) {};
                \node (H) [label={[right,xshift=0.5em,yshift=-0.5em]:\(v_3\)}] at (4,2) {};
                \node (I) [label={[right,xshift=0.5em,yshift=-0.5em]:\(v_4\)}] at (6,2) {};
                \node (J) at (8,2) {};
            \end{scope}
            \begin{scope}[line width=0.25mm]
                \path (A) edge node {} (B);
                \path (B) edge node {} (C);
                \path (C) edge node {} (D);
                \path (D) edge node {} (E);
                \path (B) edge node {} (F);
                \path (C) edge node {} (G);
                \path (C) edge node {} (H);
                \path (C) edge node {} (I);
                \path (D) edge node {} (J);
                \path (G) edge node {} (K);
                \path (G) edge node {} (L);
                \path (H) edge node {} (M);
                \path (H) edge node {} (N);
                \path (I) edge node {} (O);
                \path (I) edge node {} (P);
            \end{scope}
            \end{tikzpicture}
        \caption{The 8-vertex base tree of \(T\)}\label{fig:ex base tree}
        \end{subfigure}
    \end{center}
    \caption{Trees used in Example \ref{base tree}}
    \end{figure}
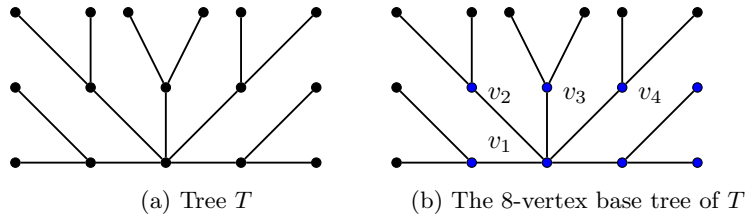
    
    To construct an explicit labeling of the \(T\) in the proof above, we start from the labeling of the \(8\)-vertex subgraph as given in the proof of Lemma \ref{odd tree}. Then, using the vertex labels given in the above proof, we have \(k_1=0011\), \(k_2=1011\), \(k_3=0101\), and \(k_4=1101\). Even though the construction leading to Conjecture 1 in \cite{Balister} was not given, through some trial and error, and following in some ways the style of the proof of Conjecture 2 in the same work, we can obtain that \(0011=0100+0111=0101+0110\), \(1011=0000+1011=0001+1010\), \(0101=1000+1101=1001+1100\), and \(1101=0010+1111=0011+1110\). It is an easy exercise to verify that the vectors in the sums form a partition of \(\mathbb{F}_2^4\). So label the appended vertices with one of the summands in each of the \(8\) sums and the edge with the other, both extended by \(1\).
\end{example}

This special case of Conjecture 2.9 given above seems to be of significant use, so we endeavor to prove it for all \(n\) (we know now only that it is true for \(n\leq 5\)). Note here that we chose this particular case in order to ensure oddness is preserved. Since applying the method shown above adds in all cases an even number of pendent edges to a vertex of presently odd degree, the resulting larger tree still contains vertices of only odd degree.
\begin{conjecture}[Pairing Conjecture]
    Take nonzero vectors \(v_1,v_2,\ldots,v_{2^{n-1}}\in\mathbb{F}_2^n\), where \(n\geq 2\), and \(v_{2i+1}=v_{2i+2}\), for \(i=0,\ldots,2^{n-2}-1\). We may partition \(\mathbb{F}_2^n\) into pairs \((p_i,q_i)\) such that \(v_i=p_i+q_i\) for all \(i\).
\end{conjecture}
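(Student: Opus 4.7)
The plan is to prove the Pairing Conjecture by induction on $n$. The base case $n = 2$ is immediate: whichever nonzero vector $w \in \mathbb{F}_2^2$ equals $v_1 = v_2$, the pairs $\{0, w\}$ and $\mathbb{F}_2^2 \setminus \{0, w\}$ partition $\mathbb{F}_2^2$ and both sum to $w$. Before tackling the inductive step, I would invoke a useful reformulation: writing $w_i := v_{2i+1} = v_{2i+2}$ for $i = 1, \ldots, 2^{n-2}$, the conjecture is equivalent to partitioning $\mathbb{F}_2^n$ into $2^{n-2}$ quadruples $Q_1, \ldots, Q_{2^{n-2}}$ such that each $Q_i$ is a coset of a $2$-dimensional subspace $W_i$ with $w_i \in W_i$. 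This equivalence holds because a sum-zero quadruple in $\mathbb{F}_2^n$ is exactly a coset of a $2$-dimensional subspace, and the three ways to split such a quadruple into pairs yield pair-sums that are precisely the three nonzero elements of the associated subspace; thus, once a quadruple containing $w_i$ in its subspace is identified, splitting it into two pairs summing to $w_i$ is automatic.

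For the inductive step, I would fix a hyperplane $H \subset \mathbb{F}_2^n$ and write $\mathbb{F}_2^n = H \sqcup H^c$, classifying each $w_i$ as \emph{internal} (if $w_i \in H$) or \emph{external} (if $w_i \in H^c$). An internal $w_i$ admits three possible quadruple shapes relative to $H$: entirely inside $H$, entirely inside $H^c$, or split $2$-$2$ across the hyperplane. An external $w_i$ forces the $2$-$2$ split. A counting argument on $|H| = |H^c| = 2^{n-1}$ forces the number of "fully in $H$" quadruples to equal the number of "fully in $H^c$" quadruples, leaving some flexibility in assigning the internal $w_i$'s among the three shapes. The ideal outcome is to reduce to two independent quadruple-partition problems on $H$ and on $H^c$ (each identified with $\mathbb{F}_2^{n-1}$), and apply the induction hypothesis to each.

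The main obstacle is that the split quadruples do not reduce cleanly to instances of the conjecture on one side: they induce a bipartite matching between residual subsets $S_H \subseteq H$ and $S_{H^c} \subseteq H^c$, with each matched pair required to have a prescribed sum (the projection of the corresponding $w_i$ to $\mathbb{F}_2^{n-1}$). The induction hypothesis handles complete quadruple-partitions only and does not a priori coordinate partial partitions on the two sides. I would attack this in one of two ways: either (a) strengthen the inductive statement to allow a partial quadruple-partition of $\mathbb{F}_2^{n-1}$ together with a designated "boundary" set whose elements must later match external elements with specified sums, and prove this extension by the same induction, or (b) preprocess by pairing the external $w_i$'s in advance, converting each pair of external pair-pairs into two full quadruples (one on each side of $H$) so that the bipartite matching step is eliminated. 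The parity of the external count and the simultaneous feasibility on the two sides of $H$ are where the argument becomes delicate; identifying the correct strengthened inductive statement is, I expect, the central challenge.
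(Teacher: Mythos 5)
The statement you set out to prove is presented in the paper as a \emph{conjecture} (the Pairing Conjecture), not a theorem: the paper offers no proof of it, remarking only that it is known for \(n\leq 5\), and instead proves the far more restrictive special case in which additionally \(v_1=v_2=\cdots=v_{2^{n-2}}\) (this is Theorem 4 of Balister et al., reproved rigorously in the paper). That proof is a direct greedy counting argument: first partition \(\mathbb{F}_2^n\) into \(2^{n-1}\) pairs \(\{r_m,s_m\}\) summing to \(v_1\); then treat the remaining equal pairs one at a time, observing that each such vector admits \(2^{n-1}\) distinct representations as a sum of two elements of \(\mathbb{F}_2^n\), that each previously committed quadruple invalidates at most four of these representations, and that \(2^{n-1}-4\cdot(2^{n-3}-1)>0\), so a fresh representation always survives. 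Your proposal attacks the full conjecture by a different route, and it is not complete.

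The gap is the one you yourself flag, and it is genuine. Your reformulation is correct and worthwhile: a set of four distinct vectors summing to zero is exactly a coset of a \(2\)-dimensional subspace, and the three pair-splittings of such a coset realize as pair-sums exactly the three nonzero vectors of that subspace, so the conjecture is equivalent to partitioning \(\mathbb{F}_2^n\) into cosets \(Q_i\) of \(2\)-dimensional subspaces \(W_i\) with \(w_i\in W_i\). The base case and the count forcing equally many quadruples fully inside \(H\) and fully inside \(H^c\) are also fine. But the inductive step is never executed: the split quadruples impose a bipartite matching between residual subsets of \(H\) and \(H^c\) with prescribed pair-sums, and neither of your proposed remedies is formulated precisely, let alone proved. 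Option (a) requires stating and proving a strengthened inductive hypothesis that coordinates two partial quadruple-partitions with a boundary matching; you do not exhibit such a statement, and its truth (even its feasibility for small \(n\)) is unverified. Option (b) silently assumes the external \(w_i\)'s can be paired so that the two resulting quadruples are simultaneously realizable on opposite sides of \(H\), which is the same coordination problem in another guise; it also needs the parity of the external count to cooperate, which you note but do not resolve. As it stands, the proposal is a program rather than a proof, and the step left open is precisely the difficulty that keeps this statement (and Conjecture 1 of Balister et al., of which it is a special case) open.
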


A useful case of the Pairing Conjecture (which is also a case of Conjecture 1 in \cite{Balister}) is the following, which appeared as Theorem 4 in \cite{Balister}: 

\begin{theorem} \textnormal{(Balister, Gy\H{o}ri, Schelp \cite{Balister})}
    \textit{Given \(2^{n-1}\) non-zero vectors \(v_1,\ldots,v_{2^{n-1}}\in\mathbb{F}_2^n\), \(n\geq 2\), with \(v_1=v_2=\cdots=v_{2^{n-2}}\) and \(v_{2i+1}=v_{2i+2}\) for all \(i=0,\ldots,2^{n-2}-1\), there exists a partition of \(\mathbb{F}_2^n\) into pairs of vectors \(\{p_i,q_i\}\), \(i=1,\ldots,2^{n-1}\) such that for all \(i\), \(v_i=p_i-q_i\).}
\end{theorem}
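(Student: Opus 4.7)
Write $v := v_1 = v_2 = \cdots = v_{2^{n-2}}$ and, for $j = 1, \ldots, 2^{n-3}$, write $w_j := v_{2^{n-2}+2j-1} = v_{2^{n-2}+2j}$. All of $v$ and the $w_j$ are nonzero by assumption; the required partition of $\mathbb{F}_2^n$ consists of $2^{n-2}$ pairs summing to $v$ together with two pairs summing to $w_j$ for each $j$. I will give a direct construction (rather than an induction on $n$): start from the canonical partition of $\mathbb{F}_2^n$ into cosets of $\langle v \rangle$ and then perform local swaps to accommodate each $w_j \ne v$. When $n = 2$ no $w_j$ exists and the coset partition itself solves the problem, so I assume $n \ge 3$ below.

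Since $v \ne 0$, the $2^{n-1}$ cosets $\{x, x+v\}$ partition $\mathbb{F}_2^n$ and each sums to $v$; call these the \emph{atomic pairs} and identify the index set with $\mathbb{F}_2^n / \langle v\rangle \cong \mathbb{F}_2^{n-1}$. The fundamental move is the \emph{swap}: given atomic pairs indexed by cosets $[x], [y] \in \mathbb{F}_2^{n-1}$, their union $\{x, x+v, y, y+v\}$ may be re-split either as $\{x,y\}, \{x+v,y+v\}$ (both summing to $x+y$) or as $\{x,y+v\}, \{x+v,y\}$ (both summing to $x+y+v$). Hence a swap produces two pairs of any common sum whose image in $\mathbb{F}_2^{n-1}$ equals $[x] + [y]$; in particular, by choosing the correct orientation, a swap of two atomic pairs whose cosets sum to $[w_j]$ yields two pairs each summing to $w_j$ (and not $w_j + v$).

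The main reduction is thus to the following matching problem in $\mathbb{F}_2^{n-1}$: for each index $j \in \{1, \ldots, 2^{n-3}\}$ with $w_j \ne v$ (equivalently $[w_j] \ne 0$), choose an unordered pair $\{[x_j], [y_j]\}$ of distinct cosets with $[x_j] + [y_j] = [w_j]$, so that across all such $j$ the chosen pairs are pairwise disjoint. This is solved greedily: the prescribed sum $[w_j]$ partitions $\mathbb{F}_2^{n-1}$ into $2^{n-2}$ pairs summing to $[w_j]$, of which at most $2(j-1)$ meet a previously used coset, leaving at least $2^{n-2} - 2(2^{n-3}-1) = 2$ valid choices at each step. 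Indices $j$ with $w_j = v$ are assigned two unused atomic pairs directly, without a swap.

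To finish, perform the selected swaps (each oriented so the resulting pair-sum is $w_j$) and fill the remaining $v$-labeled slots with the unused atomic pairs. Letting $s$ be the number of $j$ with $w_j \ne v$, the swaps consume $2s$ atomic pairs, while the number of $v$-labeled slots (first half plus the $w_j = v$ slots of the second half) is $2^{n-2} + 2(2^{n-3}-s) = 2^{n-1} - 2s$, exactly the number of atomic pairs remaining. The only delicate ingredient is the greedy counting, which is tight at $j = 2^{n-3}$; otherwise the argument is essentially a careful orchestration of these local moves, which I expect is precisely the generalization of Balister et al.'s technique alluded to in the text.
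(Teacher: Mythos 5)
Your proposal is correct and takes essentially the same route as the paper's proof: the paper likewise begins with the partition of \(\mathbb{F}_2^n\) into pairs \(\{r_m,s_m\}\) summing to \(v_1\) (your atomic pairs), realizes each equal pair of second-half vectors by re-pairing the union of two unused such pairs (your swap move, presented there as a four-case analysis that also covers your \(w_j=v\) case), and finishes with the same greedy count, \(2^{n-1}-4(2^{n-3}-1)=4>0\), which is your bound \(2^{n-2}-2(2^{n-3}-1)=2\) scaled by the two orientations per coset pair. Your quotient-space bookkeeping is a clean streamlining, but not a different argument.
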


The proof of this theorem that appeared in \cite{Balister} only dealt with a single (rather neat) case and did not provide details as to how one might alter the proof to account for the other cases. We present here a proof of Theorem 4 that uses the same basic idea as the proof in \cite{Balister}, but we do so more rigorously in an effort to eliminate any confusion regarding its validity.

\begin{proof}
    This proof contains similar ideas to those found in the proof in \cite{Balister}, though it allows for \(v_1\) to be chosen to be some vector other than \(\vec{0}1\). First partition \(\mathbb{F}_2^n\) into \(2^{n-1}\) pairs \(\{r_m,s_m\}\) so that \(v_1=r_m+s_m\) for \(m=1,\ldots,2^{n-1}\). (If \(n=2\), set \(p_1=r_1,q_1=s_1,p_2=r_2,q_2=s_2\), and we are done.) Observe that for \(j=2^{n-2}\), we may express \(v_{j+1}\) as \(2^{n-1}\) distinct sums, so set \(p_{j+1},q_{j+1},p_{j+2},q_{j+2}\) according to the following cases:
    \begin{enumerate}
        \item \(v_{j+1}=r_a+r_b,a\neq b\)\\
        Then we have 
        \[v_{j+2}=v_{j+1}=r_a+r_b=(v_1+s_a)+(v_1+s_b)=s_a+s_b,\]
        so let \(p_{j+1}=r_a\), \(q_{j+1}=r_b\), \(p_{j+2}=s_a\), \(q_{j+2}=s_b\).
        \item \(v_{j+1}=s_a+s_b,a\neq b\)\\
        Then, similarly to the previous case, we have
        \[v_{j+2}=v_{j+1}=s_a+s_b=(v_1+r_a)+(v_1+r_b)=r_a+r_b,\]
        so let \(p_{j+1}=s_a\), \(q_{j+1}=s_b\), \(p_{j+2}=r_a\), \(q_{j+2}=r_b\).
        \item \(v_{j+1}=r_a+s_b,a\neq b\)\\
        Then we have
        \[v_{j+2}=v_{j+1}=r_a+s_b=(v_1+s_a)+(v_1+r_b)=s_a+r_b,\]
        so let \(p_{j+1}=r_a\), \(q_{j+1}=s_b\), \(p_{j+2}=s_a\), \(q_{j+2}=r_b\).
        \item \(v_{j+1}=r_a+s_a\)\\
        Then, for some \(b\neq a\), we have
        \[v_{j+2}=v_{j+1}=r_a+s_a=r_b+s_b,\]
        so let \(p_{j+1}=r_a\), \(q_{j+1}=s_a\), \(p_{j+2}=r_b\), \(q_{j+2}=s_b\).
    \end{enumerate}
    In all cases, then, we have \(\{p_{j+1},q_{j+1},p_{j+2},q_{j+2}\}=\{r_a,s_a,r_b,s_b\}\), \(a\neq b\). (If \(n=3\), we are done.)
    
    Consider now the next pair of equal vectors, \(v_{j+3},v_{j+4}\). As in the case of \(v_{j+1}=v_{j+2}\), we may express \(v_{j+3}\) as \(2^{n-1}\) distinct sums. Here, as many as four sums may contain one of the vectors \(r_a,s_a,r_b,s_b\), so since \(2^{n-1}-4>0\), for \(n\geq4\), by the same process as above, we may assign \(p_{j+3},q_{j+3},p_{j+4},q_{j+4}\) so that \(\{p_{j+3},q_{j+3},p_{j+4},q_{j+4}\}=\{r_c,s_c,r_d,s_d\}\), where \(c\) and \(d\) are distinct and are not equal to \(a\) or \(b\). (If \(n=4\), we are done.)
    
    The next pair of equal vectors, \(v_{j+5},v_{j+6}\) can be again expressed as \(2^{n-1}\) distinct sums, but this time as many as \(8\) sums may contain one of \(r_a,s_a,r_b,s_b,r_c,s_c,r_d,s_d\). In a similar manner to the above, since \(2^{n-1}-8>0\) for \(n\geq 5\), we may assign \(p_{j+5},q_{j+5},p_{j+6},q_{j+6}\) so that \(\{p_{j+5},q_{j+5},p_{j+6},q_{j+6}\}=\{r_e,s_e,r_f,s_f\}\) for \(e\) and \(f\) distinct and not equal to \(a,b,c\) or \(d\).
    
    After each assignment of the pairs \(\{p_k,q_k\}\) and \(\{p_{k+1},q_{k+1}\}\), we have \(4\) fewer ``available'' sums for the remaining pairs of vectors. The last pair of vectors for which we need to assign \(p_k,q_k\) is \(v_{2^{n-1}-1}=v_{2^{n-1}}\). By this time, we have \(2^{n-1}-4\cdot(2^{n-3}-1)\) possible sums not containing vectors used in a previous partitioning step, since each of the \(2^{n-3}\) pairs after the first reduces the number of ``available'' sums by \(4\). Now \(2^{n-1}-4\cdot(2^{n-3}-1)=2^{n-1}-2^{n-1}+4>0\), so it is still possible to set \(p_{2^{n-1}-1},q_{2^{n-1}-1},p_{2^{n-1}},q_{2^{n-1}}\) so that \(\{p_{2^{n-1}-1},q_{2^{n-1}-1},p_{2^{n-1}},q_{2^{n-1}}\}=\{r_y,s_y,r_z,s_z\}\) for \(y\) and \(z\) distinct and not equal to any of the \(a,b,c,d,e,f,\ldots\) used previously. 
    
    There remain \(2^{n-2}\) pairs \(\{r_k,s_k\}\) such that \(v_1=r_k+s_k\) that have not yet been assigned as some pair \(\{p_{\ell},q_{\ell}\}\). So let each pair \(\{p_{m},q_{m}\}\) for \(m=1,\ldots,2^{n-2}\) be given by one of the pairs \(\{r_k,s_k\}\). We have now partitioned \(\mathbb{F}_2^n\) as desired.
\end{proof}

The collection of techniques and constructions given here allows us to make substantial progress on the Odd Tree Conjecture, but most graphs cannot be obtained using one of the operations or constructions we have given. For example, the graph in Figure \ref{fig:unobtainable} is not a caterpillar, nor can it be obtained via either the splicing method we proposed or the method that arises from partitioning \(\mathbb{F}^n_2\) in any way.
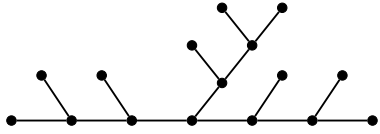
\begin{figure}[h]
\begin{center}
    \begin{tikzpicture}[scale=0.4]
    \begin{scope}[every node/.style={circle,fill=black,inner sep=0pt, minimum size = 1.25mm,draw}]
        \node (A) at (0,0) {};
        \node (B) at (2,0) {};
        \node (C) at (4,0) {};
        \node (D) at (6,0) {};
        \node (E) at (8,0) {};
        \node (F) at (10,0) {};
        \node (G) at (12,0) {};
        \node (H) at (1,1.5) {};
        \node (I) at (3,1.5) {};
        \node (J) at (7,1.25) {};
        \node (K) at (9,1.5) {};
        \node (L) at (11,1.5) {};
        \node (M) at (6,2.5) {};
        \node (N) at (8,2.5) {};
        \node (O) at (7,3.75) {};
        \node (P) at (9,3.75) {};
    \end{scope}
    \begin{scope}[line width=0.25mm]
        \path (A) edge node {} (B);
        \path (B) edge node {} (C);
        \path (C) edge node {} (D);
        \path (D) edge node {} (E);
        \path (E) edge node {} (F);
        \path (F) edge node {} (G);
        \path (B) edge node {} (H);
        \path (C) edge node {} (I);
        \path (D) edge node {} (J);
        \path (E) edge node {} (K);
        \path (F) edge node {} (L);
        \path (J) edge node {} (M);
        \path (J) edge node {} (N);
        \path (N) edge node {} (O);
        \path (N) edge node {} (P);
    \end{scope}
    \end{tikzpicture}
\end{center}
\caption{A tree that cannot be obtained via the preceding methods}\label{fig:unobtainable}
\end{figure}
\section{Additional Results Involving Bipartite Graphs}

In addition to pursuing the proof of the Odd Tree Conjecture, we also studied a technique presented in \cite{Balister} that involves joining four copies of a set-sequential bipartite graph (not necessarily an odd tree) in such a way as to produce a larger graph that is set-sequential. This raises the following question: Given four copies of a set-sequential bipartite graph, how can we join them by three single edges so that the resulting graph is set-sequential?
\begin{lemma}
    Let \(G\) be a set-sequential graph on \(n\) vertices that is either a caterpillar or a bipartite graph with at least two vertices of degree \(1\) in each color class. Let \(G_1,G_2,G_3,G_4\) be copies of \(G\). In each \(G_i\), choose two pendent vertices \(v_i\) and \(u_i\), \(i=1,2,3,4\), that are in the same color class. The graph constructed by adding the edges \((v_1,v_2)\), \((u_2,u_3)\), and \((v_3,v_4)\) is set-sequential.
\end{lemma}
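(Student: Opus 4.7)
The plan is to construct an explicit set-sequential labeling of the combined graph by extending each copy's labeling by two extra bits, in the spirit of the splicing construction of Theorem~\ref{splicing}. Since $G$ is set-sequential by $(k{+}1)$-dimensional vectors with $n+m=2^{k+1}-1$ elements, the combined graph has $4n+(4m+3)=2^{k+3}-1$ elements, to be labeled by all nonzero vectors of $\mathbb{F}_2^{k+3}$.

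First, I would fix a set-sequential labeling of $G$ and apply it to each copy $G_i$. Then I would extend each copy's labels by a $2$-bit suffix chosen per color class: vertices of $X_i$ receive a suffix $\alpha_i\in\mathbb{F}_2^2$, vertices of $Y_i$ receive $\beta_i$, and by the sum condition each edge of $G_i$ receives the suffix $\gamma_i:=\alpha_i+\beta_i$. Choosing $(\alpha_i)_{i=1}^4$, $(\beta_i)_{i=1}^4$, and $(\gamma_i)_{i=1}^4$ to each be a permutation of $\mathbb{F}_2^2$ (an explicit choice is available from the proof of Theorem~\ref{splicing}) ensures that the $4(2^{k+1}-1)=2^{k+3}-4$ extended labels across the four copies are all distinct, and the remaining three vectors in $\mathbb{F}_2^{k+3}\setminus\{\vec{0}\}$ are exactly $\vec{0}01$, $\vec{0}10$, and $\vec{0}11$.

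These three vectors must then be assigned as labels of the three new edges $(v_1,v_2), (u_2,u_3), (v_3,v_4)$. Since the sum condition pins each new edge's label to the sum of its endpoint labels, and each target label has $\vec{0}$ as prefix, the endpoints of each new edge must carry the same original $G$-label. The hypothesis that $G$ has at least two pendants in each color class (or is a caterpillar) gives exactly enough room to pick two distinct pendants $v,u$ in a common color class and designate $v_i,u_i$ in each $G_i$ as copies of $v$ and $u$ so that each of the three new edges connects same-label pendants across copies.

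The main obstacle is a parity constraint: if the same labeling of $G$ and the same orientation of color classes are used in every copy, the three new-edge suffixes $\alpha_1+\alpha_2$, $\alpha_2+\alpha_3$, $\alpha_3+\alpha_4$ sum to $\alpha_1+\alpha_4$, which cannot equal $01+10+11=\vec{0}$ while $\alpha$ remains a permutation of $\mathbb{F}_2^2$. I would circumvent this by breaking the symmetry between copies, either by swapping the roles of $X_i$ and $Y_i$ in an appropriate subset of copies or by using different set-sequential labelings of $G$ in different copies. Either modification turns some of the three suffixes into sums of the form $\alpha_i+\beta_j$ rather than $\alpha_i+\alpha_j$, which breaks the parity and allows the three suffixes to exhaust $\{01,10,11\}$. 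A short case analysis on the orientations (or labelings) chosen, enabled by the pendant hypothesis, completes the proof.
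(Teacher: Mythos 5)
Your first three paragraphs track the paper's own proof exactly: the same per-class two-bit extension with \(\alpha,\beta,\gamma\) permutations of \(\mathbb{F}_2^2\), the same count, and the same observation that the three uncovered vectors \(\vec{0}01,\vec{0}10,\vec{0}11\) must label the three new edges, forcing each new edge to join copies of the same pendent vertex. Your parity obstruction is also correct, and in fact sharper than you state: since \(\alpha_1+\alpha_2+\alpha_3+\alpha_4=00\), the edges \((v_1,v_2)\) and \((v_3,v_4)\) receive \emph{identical} labels. The gap is your final step: neither proposed symmetry-breaking can overcome this. Swapping the roles of \(X_i\) and \(Y_i\) in some copies still leaves every copy of \(v\) carrying the same prefix (the label of \(v\) in \(G\)); injectivity then forces the four suffixes \(s_1(v),\ldots,s_4(v)\) to be pairwise distinct, hence to exhaust \(\mathbb{F}_2^2\), so the labels \(\vec{0}(s_1(v)+s_2(v))\) and \(\vec{0}(s_3(v)+s_4(v))\) of the two \(v\)-edges coincide. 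Equivalently, the sum of the three new-edge suffixes equals \(s_2(u)+s_3(u)\neq 00\) for \emph{any} per-vertex suffix assignment, while \(01+10+11=00\). The obstruction is therefore not an artifact of class-constant suffixes that orientation choices can break; it is intrinsic to the pattern \((v_1,v_2),(u_2,u_3),(v_3,v_4)\), in which two of the three edges pair up all four copies of the single vertex \(v\). Your other escape route, different set-sequential labelings in different copies, is not worked out and faces a real barrier: the union of the four copies' labels then no longer automatically misses exactly \(\vec{0}01,\vec{0}10,\vec{0}11\); one needs the \(X\)-, \(Y\)-, and \(E\)-label sets of the various labelings to fit together into a partition within every suffix class (exactly the failure mode of Example~\ref{color classes}), and when the labelings do share class images, the collision argument above reappears verbatim. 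So no ``short case analysis'' can complete the proof as described.

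For comparison, the paper's printed proof is your first three paragraphs followed by a different repair: it swaps the last two digits of \(v_3\)'s label with those of its pendent edge inside \(G_3\), so that \((v_3,v_4)\) gets \(\vec{0}10\). Your obstruction, pushed one line further, shows this repair must also fail, and indeed it does: with the paper's extensions, the swapped \(v_3\) is labeled \(L(v)11\), colliding with \(v_2\), and the pendent edge of \(v_3\) now carries suffix \(10\), colliding with its own copy in \(E_4\) (whose suffix is \(01+11=10\)). So neither your proposal nor the paper's argument proves the lemma as stated; what your analysis really establishes is that this connection pattern cannot be realized by any labeling in which the four copies of a vertex keep a common prefix. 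A provable variant keeps \(v,u\) pendent in one class but routes the third edge through copies of a pendent vertex \(w\) in the \emph{other} class, i.e., adds \((v_1,v_2),(u_2,u_3),(w_3,w_4)\): taking \(\alpha=(00,01,10,11)\) and \(\beta=(11,01,00,10)\) (so \(\gamma=(11,00,10,01)\)) gives new-edge labels \(\vec{0}01\), \(\vec{0}11\), \(\vec{0}10\) with no repair needed, and this is covered by the lemma's hypothesis of two degree-one vertices in each color class.
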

\begin{proof}
    Note that caterpillars are bipartite graphs, so we may express \(G\) as a bipartite graph with color classes \(X\) and \(Y\). Take four copies \(G_1,G_2,G_3,G_4\) of \(G\) with color classes \(X_1,Y_1,X_2,Y_2,X_3,Y_3,X_4,Y_4\), respectively. We must extend the \((n+1)\)-dimensional vectors labeling \(G\) by two digits so they are of the proper dimension to form a set-sequential labeling for the larger graph consisting of \(G_1,G_2,G_3,G_4\) with three edges added. Extend the vectors in \(X_1\) and \(Y_1\) by ``\(00\)'', the vectors in \(X_2\) and \(Y_4\) by ``\(11\)'', the vectors in \(X_3\) and \(Y_2\) by ``\(10\)'', and the vectors in \(X_4\) and \(Y_3\) by ``\(01\)''. Note that these extensions produce all nonzero vectors of dimension \((n+3)\) with the exceptions of \(\vec{0}01\), \(\vec{0}10\), and \(\vec{0}11\). The edge \((v_1,v_2)\) will be labeled with \(\vec{0}11\), the edge \((u_2,u_3)\) will be labeled with \(\vec{0}01\), but the edge \((v_3,v_4)\) will also be labeled with \(\vec{0}11\), which does not fulfill the requirements for a set-sequential labeling. We may alter this labeling to be set-sequential, however: The vector labeling \(v_3\) ends in ``\(10\)'', and the vector labeling its pendent edge ends in ``\(11\)''. Since \(v_3\) is a leaf, we may ``swap'' the last two digits of these vectors with each other without violating the sum condition that is necessary for a set-sequential labeling. Now then the edge \((v_3,v_4)\) is labeled with \(\vec{0}10\), so the graph consisting of \(G_1,G_2,G_3,G_4\) together with the three indicated edges is has a labeling and so is set-sequential.
\end{proof}

We give now a modification of Theorem 3 in \cite{Balister}, imposing two extra conditions that allow us to present a more complete proof of the theorem than was originally given in \cite{Balister}.

\begin{theorem}
    \textit{Let \(G\) be a strongly set colorable bipartite graph with color classes \(X\), \(Y\) and edge set \(E\). Let \(G_1,G_2,G_3,G_4\) be four disjoint copies of \(G\) with color classes \(X_1,Y_1,X_2,Y_2,X_3,Y_3,X_4,Y_4\) and edge sets \(E_1,E_2,E_3,E_4\), respectively. Let \(G_0\) denote the graph obtained from the disjoint union of the graphs \(G_1,G_2,G_3,G_4\) by adding edges \(e_1,e_2,e_3\) with the following four properties: 
    \begin{enumerate}
        \item each \(e_i\) joins two copies of the same vertex;
        \item one of the following three possibilities occurs:
        \begin{enumerate}
        \item the edges join \(X_1\) and \(X_2\), \(X_2\) and \(X_3\), \(X_3\) and \(X_4\), respectively; or
        \item the edges join \(X_1\) and \(Y_2\), \(X_2\) and \(Y_3\), \(X_3\) and \(Y_4\), respectively; or
        \item the edges join \(X_1\) and \(X_2\), \(Y_2\) and \(Y_4\), \(Y_1\) and \(Y_3\), respectively.
        \end{enumerate}
        \item there are two pendent vertices in \(G\), namely \(u_1,u_2,u_3,u_4,v_1,v_2,v_3,v_4\) in corresponding \(G_1,G_2,G_3,G_4\)
        \item all the edges are joining leaves in partite sets.
    \end{enumerate}
    Then \(G_0\) is strongly set colorable.}
\end{theorem}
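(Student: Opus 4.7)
The plan is to adapt the extension-and-swap strategy of the preceding lemma to the three configurations in property~(2). Writing $G$'s set-sequential labeling in $\mathbb{F}_2^m$ with $|V(G)|+|E(G)| = 2^m-1$, I first extend it to an $\mathbb{F}_2^{m+2}$-valued labeling of $G_1\cup G_2\cup G_3\cup G_4$ by appending fixed two-bit suffixes according to the assignment $X_1,Y_1\mapsto 00$, $X_2,Y_4\mapsto 11$, $X_3,Y_2\mapsto 10$, $X_4,Y_3\mapsto 01$. A direct check shows that the induced suffixes on $E_1,E_2,E_3,E_4$ are $00,01,11,10$, so the $4(2^m-1)=2^{m+2}-4$ labels so obtained exhaust every nonzero vector of $\mathbb{F}_2^{m+2}$ except the three vectors $\vec{0}01,\vec{0}10,\vec{0}11$, which are precisely the labels that must be assigned to $e_1,e_2,e_3$.

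For each of the three configurations (a), (b), (c), property~(1) causes the $G$-label components of the two endpoints of each $e_i$ to cancel, so that the natural suffix of $e_i$ is just the XOR of the suffixes of its two endpoints. Carrying out this XOR computation subcase by subcase shows that in each configuration two of the added edges acquire the same suffix while the third suffix in $\{01,10,11\}$ is omitted; for example, in case~(a) the natural suffixes of $e_1,e_2,e_3$ are $11,01,11$ and $10$ is absent, so the naive extension produces a collision that must be repaired.

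To produce the missing suffix I invoke the pendant-vertex swap of the preceding lemma. Properties~(3) and~(4) guarantee that each endpoint of every added edge is a leaf of its copy of $G$ with a unique pendant edge in the corresponding $E_i$, and that two distinct leaf candidates $u_i$ and $v_i$ are available in each copy. Exchanging the two-bit suffix of a chosen leaf with the suffix of its pendant edge preserves the only sum equation in which the leaf participates and shifts the suffix of the affected added edge by exactly the needed amount; with two independent leaves in each $G_i$, I can select the swap so that it touches only one of the colliding added edges, leaving the other two undisturbed.

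The principal obstacle I anticipate is the per-case bookkeeping: in each of the three subcases I must pinpoint which leaf in which copy to swap so that the intended $e_i$ acquires the missing suffix, confirm that property~(4) actually permits the swap in the correct partite set, and verify that the swap introduces no new label collision elsewhere in $G_0$. Once these checks are complete in all three subcases, every element of $V(G_0)\cup E(G_0)$ carries a distinct nonzero vector of $\mathbb{F}_2^{m+2}$ satisfying the sum condition, so $G_0$ is strongly set colorable.
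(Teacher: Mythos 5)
Your extension bookkeeping is correct (the added edges in case (a) do acquire suffixes $11,01,11$ with $10$ missing), but the repair step is where the argument genuinely breaks, and it cannot be fixed by choosing the swap more carefully. Work in case (a) with your suffixes ($X_1,X_2,X_3,X_4\mapsto 00,11,10,01$, hence $E_1,\dots,E_4\mapsto 00,01,11,10$), and let $x$ be the label of the pendent vertex $v$ in $G$, so its copies $v_1,\dots,v_4$ carry $x00,x11,x10,x01$. Checking all six endpoint swaps, the only single swap that places the missing suffix $10$ on an added edge is the one at the $X_3$-endpoint of $e_3$; but after that swap this leaf carries $x11$, which is exactly the label of $v_2\in X_2$, and its pendant edge now ends in $10$, duplicating the copy of that same pendant edge in $E_4$ (swapping at the $X_4$-endpoint instead makes $e_3$ the zero vector). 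This is structural, not bad luck: under your extension the four vertex-class suffixes and the four edge-class suffixes are the same set $\{00,01,10,11\}$, so handing any leaf its pendant edge's suffix necessarily reproduces the label of another copy of that same leaf. Your proposal asserts that the swap ``leaves the other two undisturbed'' but never verifies distinctness of the modified labels, and that verification fails for every available swap.

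Worse, in the configuration your computation tacitly assumes --- three pairwise disjoint added edges with $e_1=(v_1,v_2)$ and $e_3=(v_3,v_4)$ joining the four copies of one pendent vertex $v$ --- no combination of swaps, indeed no labeling at all in which the copies of $v$ keep the base label $x$ and gain two extra bits, can succeed: the four labels $x\sigma_1,\dots,x\sigma_4$ must be distinct, so $\{\sigma_1,\dots,\sigma_4\}=\{00,01,10,11\}$, whence $\sigma_1+\sigma_2=\sigma_3+\sigma_4$ and $e_1=v_1+v_2=v_3+v_4=e_3$ identically; the collision is invariant under everything you are allowed to do. A workable repair has to change the combinatorics as well as the suffixes: take $e_1=(u_1,u_2)$, $e_2=(u_2,u_3)$, $e_3=(v_3,v_4)$, so that the middle edge shares a leaf with the first, and swap simultaneously at $u_2,u_3,u_4$; then the copies of $u$ carry suffixes $00,01,11,10$, their pendant edges $00,11,10,01$, and $e_1,e_2,e_3$ receive $\vec{0}01,\vec{0}10,\vec{0}11$ with all labels distinct (the swap at $u_4$, which touches no added edge, is needed precisely to dodge the collision your single swap runs into). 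Two further gaps you glossed over: in case (b), an edge between $X_1$ and $Y_2$ can never join two copies of the same vertex, since copies of an $X$-vertex lie only in $X$-classes, so the base-label cancellation you invoke there is impossible; and in case (c), class-plus-swap labelings give every added edge a suffix in $\{01,11\}$, so $\vec{0}10$ is never attained. For what it is worth, the paper's own proof shares these defects --- its stated extension even assigns $01$ to both $X_3$ and $X_4$, duplicating every $X$-vertex across $G_3$ and $G_4$ --- so conformity with the paper would not have rescued the argument; as written, neither proof establishes the theorem.
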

\begin{proof}
For Case (a), consider extensions \(``00"\) to \(X_1,Y_1\), \(``11"\) on \(X_2,Y_4\), \(``10"\) on \(Y_2,Y_3\) and \(``01"\) on \(X_3,X_4\). This is a set-sequential labeling for all vertices and edges except for the edge from \(X_3\) to \(X_4\). As in the proof of Lemma 3.1, switch the extension of the vector labeling the vertex with connecting edges to \(X_3\) in \(X_4\) with the extension of the vector labeling its pendent edge in \(X_4\). 

For Case (b), consider extensions \(``00"\) to \(X_1,Y_1\), \(``11"\) on \(X_2,Y_4\), \(``10"\) on \(Y_2,Y_3\) and \(``01"\) on \(X_3,X_4\). This is a set-sequential labeling for all vertices and edges except for the edge from \(X_3\) to \(Y_4\). As before, switch the extension of the vector labeling the vertex with connecting edges to \(X_3\) in \(Y_4\) with the extension of the vector labeling its pendent edge in \(Y_4\). 

For Case (c), consider extensions \(``00"\) to \(X_1,Y_1\), \(``11"\) on \(X_2,Y_4\), \(``10"\) on \(Y_2,Y_3\) and \(``01"\) on \(X_3,X_4\). This is a set-sequential labeling for all vertices and edges except the edge from \(Y_2\) to \(Y_4\). Again, switch the extension of the vector labeling the vertex with connecting edges to \(Y_2\) in \(Y_4\) with the extension of the vector labeling its edge pendent in \(Y_2\). 
\end{proof}

\section{Conclusion}

One object of immediate interest is resolving the Pairing Conjecture. With that proof in hand, we would then have found a large class of odd trees that is set-sequential.  This combined with our results on caterpillars and splicing small trees together represents substantial progress toward the Odd Tree Conjecture. As explained at the end of Section 2, though, this set of constructions does not prove the Odd Tree Conjecture in its entirety. Further study may produce additional constructions and techniques that would allow us to join smaller trees together in ways that preserve both oddness and set-sequentialness. This may still fall short of proving the Odd Tree Conjecture as presently stated, but it is thought that larger classes of set-sequential trees may be obtained in this manner.

\section{Acknowledgements}
The work presented here was done as part of the Budapest Semesters in Mathematics Summer Undergraduate Research Program under the supervision of the second author. The work of the second author was supported by the National Research, Development and Innovation Office under Grant K132696. The first, third, and fourth authors are grateful to the second for presenting and allowing joint work on such an interesting problem.

\end{document}